\documentclass{amsart}
\usepackage[cp1250]{inputenc}

\newtheorem{theorem}{Theorem}[section]

\newtheorem{lemma}[theorem]{Lemma}

\newtheorem{definition}[theorem]{Definition}
\newtheorem{example}[theorem]{Example}
\newtheorem{remark}[theorem]{Remark}
\numberwithin{equation}{section}

\hyphenation{following}

\begin{document}

\title[Quasi-particle fermionic formulas for $(k,3)$-admissible configurations]
 {Quasi-particle fermionic formulas for $(k,3)$-admissible configurations}

\author{Miroslav Jerkovi\'{c} and Mirko Primc}

\address{University of Zagreb, Zagreb, Croatia}

\curraddr{}

\email{mjerkov@fkit.hr, primc@math.hr}

\subjclass[2000]{Primary 17B67; Secondary 17B69, 05A19.\\ \indent Partially supported by the Ministry of Science and Technology of the Republic of Croatia, Project ID 037-0372794-2806}

\begin{abstract}
We construct new monomial quasi-particle bases of Feigin-Stoya\-nov\-sky's type subspaces for affine Lie algebra $\mathfrak{sl}(3,\mathbb{C})^{\widetilde{}}$ from which the known fermionic-type formulas for $(k,3)$-admissible configurations follow naturally. In the proof we use vertex operator algebra relations for standard modules and coefficients of intertwining operators.
\end{abstract}

\maketitle

\section{Introduction}

Finding combinatorial bases of standard modules for affine Lie algebras is a part of Lepowsky-Wilson's approach \cite{LW} to study Rogers-Ramanujan type identities by using representation theory. This approach lead J.~Lepowsky and R.L.~Wilson to discover vertex operator construction of fundamental $\mathfrak{sl}(2,\mathbb{C})^{\widetilde{}}$-modules and ever since vertex operators are important technical tool in a study of combinatorial bases and fermionic character formulas. Very important role in this study play principal subspaces of standard modules, let us mention only the works of B.~Feigin and A.V.~Stoyanovsky \cite{FS}, G.~Georgiev \cite {G} and C.~Calinescu, S.~Capparelli, J.~Lepowsky and A.~Milas \cite{C1, C2, CalLM1, CalLM2, CalLM3, CLM1, CLM2} which are related to this paper. In \cite{AKS} fermionic characters are obtained for arbitrary highest-weight integrable $\mathfrak{sl}(\ell+1,\mathbb{C})^{\widetilde{}}$-modules.

In this paper we are interested in Feigin-Stoyanovsky's type subspaces of standard modules for affine Lie algebras $\mathfrak{sl}(\ell + 1,\mathbb{C})^{\widetilde{}}$. These subspaces are in many ways parallel to principal subspaces, and by using methods developed by Capparelli, Lepowsky and Milas for principal subspaces, one can construct bases for Feigin-Stoyanovsky's type subspaces of all standard modules (cf. \cite{B1, C1, C2, J1, P2, T1, T2})---in the case of $\mathfrak{sl}(\ell + 1,\mathbb{C})^{\widetilde{}}$ these combinatorial bases are closely related to $(k,\ell +1)$-admissible
configurations studied in \cite{FJLMM, FJMMT1, FJMMT2}. But, in a sharp contrast with this general results, we know fermionic-type character formulas for higher level modules only for affine Lie
algebra $\tilde{\mathfrak{g}}=\mathfrak{sl}(3,\mathbb{C})^{\widetilde{}}$ (cf. \cite{FJMMT1, FJMMT2, J1, J2, J3}).

In order to understand better the combinatorics behind fermionic character formulas for Feigin-Stoyanovsky's type subspaces, in this paper we construct quasi-particle type bases, originally used in
\cite{FS} and \cite{G} for principal subspaces. By following Georgiev's arguments we construct quasi-particle monomial bases of Feigin-Stoyanovsky's type subspaces for $\mathfrak{sl}(3,\mathbb{C})^{\widetilde{}}$ and recover the known fermionic character formula. Our construction also shows why similar arguments cannot work for higher levels and higher ranks, say level two $\mathfrak{sl}(4,\mathbb{C})^{\widetilde{}}$-modules.

One of the main tools applied here is the vertex operator algebra construction of fundamental $\mathfrak{sl}(\ell + 1,\mathbb{C})^{\widetilde{}}$-modules (cf. \cite{FK, S} and also \cite{DL,FLM}) and the use of intertwining operators arising from this construction.

We now present a short overview of the main results.

For a simple complex Lie algebra $\mathfrak{g}$ of type $A_2$ and a Cartan subalgebra $\mathfrak{h}$ of $\mathfrak{g}$ introduce a $\mathbb{Z}$-grading $\mathfrak{g} = \mathfrak{g}_{-1} + \mathfrak{g}_0 + \mathfrak{g}_1$ such that $\mathfrak{h} \subset \mathfrak{g}_0$. Let $\tilde{\mathfrak{g}}$ be the affine Lie algebra associated to $\mathfrak{g}$ with the associated $\mathbb{Z}$-grading.

Choose a basis for $\mathfrak{g}_1$ given by root vectors $x_{\gamma_1}$ and $x_{\gamma_2}$, where for simple roots $\alpha_1$ and $\alpha_2$ the roots $\gamma_1 = \alpha_1 + \alpha_2$ and $\gamma_2 = \alpha_2$ represent the so-called set of colors.

Define Feigin-Stoyanovsky's type subspace of a standard $\tilde{\mathfrak{g}}$-module $L(\Lambda)$ by \begin{align*} W(\Lambda) = U(\tilde{\mathfrak{g}}_1) \cdot v_{\Lambda} \end{align*}
for $\tilde{\mathfrak{g}}_1 = \mathfrak{g}_{1}\otimes\mathbb{C}[t,t^{-1}]$ and $v_{\Lambda}$ being a highest weight vector in $L(\Lambda)$.

Define a quasi-particle $x_{n\gamma}(m)$ of color $\gamma \in \{ \gamma_1, \gamma_2 \}$, charge $n$ and degree $m$: \begin{align*} x_{n\gamma_i}(m) = \sum_{\genfrac{}{}{0pt}{}{m_1,\dots, m_n
\in \mathbb{Z}}{m_1+\dots + m_n = m}} x_{\gamma_i} (m_n) \cdots x_{\gamma_i}(m_1), \end{align*} where by $x(m) = x\otimes t^m$ for $x \in \mathfrak{g}$ and $m \in \mathbb{Z}$ we denote the elements of $\tilde{\mathfrak{g}}$.

Let $\Lambda_0$, $\Lambda_1$ and $\Lambda_2$ be the fundamental weights of $\tilde{\mathfrak{g}}$. One can show that for integral dominant weights $\Lambda = k_0\Lambda_0 + k_1\Lambda_1$ and
$\Lambda = k_1\Lambda_1 + k_2\Lambda_2$ one can parameterize a basis $\frak{B}_{W(\Lambda)}\cdot v_\Lambda$ for $W(\Lambda)$ by quasi-particle monomials \begin{align*} x_{n_{2,b} \gamma_2} (m_{2,b}) \cdots x_{n_{2,1}\gamma_2}(m_{2,1}) x_{n_{1,a}\gamma_1}(m_{1,a}) \cdots x_{n_{1,1}\gamma_1}(m_{1,1}) \end{align*} with degrees $m_{1,1}, \dots, m_{1,a}$ and $m_{2,1}, \dots, m_{2,b}$ obeying certain
initial and difference conditions, cf. definition \eqref{basis} and Theorem~\ref{theorem}.

The spanning set argument for $\frak{B}_{W(\Lambda)}$ is shown by reducing the spanning set of Poincar\'{e}-Birkhoff-Witt type by relations on the set of quasi-particle monomials, see Lemma~\ref{lem_relations 3}, Lemma~\ref{lem_relations 4} and Remark~\ref{R:the way we use relations}. 

The proof of linear independence of $\frak{B}_{W(\Lambda)}$ is carried out by induction on the linear order on quasi-particle monomials, using certain coefficients of intertwining operators, as
well as other operators obtained from the vertex operator algebra construction of fundamental $\tilde{\mathfrak{g}}$-modules. The other main ingredient in the proof of linear independence is a
projection $\pi$ (mimicking the construction in \cite{G}) distributing the constituting quasi-particles of monomials among factors of the tensor product of $\mathfrak{h}-$weight subspaces of
fundamental $\tilde{\mathfrak{g}}$-modules, thus ensuring the compatibility of the usage of above mentioned operators with the defining conditions of $\frak{B}_{W(\Lambda)}$.

Finally, as a direct consequence of having obtained the quasi-particle bases $\frak{B}_{W(\Lambda)}$, one may straightforwardly write down fermionic-type character formulas for $W(\Lambda)$, cf. Theorem~\ref{formulas}.

We also show these formulas are the same as the ones already obtained much less explicitly in \cite{J3} by solving a system of recurrence relations for characters. This result justifies our belief that the quasi-particle approach to building combinatorial bases for $W(\Lambda)$ and to calculating the corresponding fermionic-type characters is worth studying.

\section{Affine Lie algebra $\mathfrak{sl}(\ell + 1,\mathbb{C})^{\widetilde{}}$}

\subsection{Preliminaries}

In this paper we are interested mainly in affine Lie algebra $\mathfrak{sl}(3,\mathbb{C})^{\widetilde{}}$, but it will be convenient to have a bit more general notation.

For $\mathfrak{g} = \mathfrak{sl}(\ell + 1,\mathbb{C})$ let $\mathfrak{h}$ be its Cartan subalgebra, $R$ the corresponding root system with fixed simple roots $\alpha_1, \dots, \alpha_{\ell}$, and
 $\omega_1, \dots, \omega_{\ell}$ the corresponding fundamental weights of $\mathfrak{g}$. For later use we set $\omega_0 = 0$. Denote by $Q=Q(R)$ the root lattice and by $P=P(R)$ the weight lattice of $\mathfrak{g}$, and by $\langle \cdot,\cdot\rangle$ the usually normalized Killing form identifying $\mathfrak{h}$ with $\mathfrak{h}^{\star}$. We have the root space decomposition $\mathfrak{g} = \mathfrak{h} + \sum_{\alpha \in R} \mathfrak{g}_{\alpha}$ and we fix root vectors $x_{\alpha}$.

Let $\tilde{\mathfrak{g}}$ be the associated affine Lie algebra \begin{align*} \tilde{\mathfrak{g}} = \mathfrak{g} \otimes \mathbb{C}[t,t^{-1}] \oplus \mathbb{C} c \oplus \mathbb{C} d, \end{align*} c denoting the canonical central element and d the degree operator, with Lie product given in the usual way (cf. \cite{Kac}). For $x \in \mathfrak{g}$ we write $x(m) = x\otimes t^m$ for $m \in \mathbb{Z}$ and define the formal Laurent series $x(z)= \sum_{m \in \mathbb{Z}} x(m)z^{-m-1}$ in indeterminate $z$.

\subsection{Standard modules}

Denote by $\Lambda_0, \dots, \Lambda_{\ell}$ the corresponding fundamental weights of $\tilde{\mathfrak{g}}$. For a given integral dominant weight $\Lambda = k_0\Lambda_0 + \cdots + k_{\ell} \Lambda_{\ell}$ denote by $L(\Lambda)$ the standard $\tilde{\mathfrak{g}}$-module with highest weight $\Lambda$, and by $v_{\Lambda}$ a fixed highest weight vector of $L(\Lambda)$. Let $k=\Lambda(c)=k_0+\cdots + k_{\ell}$ be the level of $L(\Lambda)$. For a higher level standard module $L(\Lambda)$ we will use the Kac theorem on complete reducibility to embed it in the corresponding tensor product of fundamental modules, \begin{align*}L(\Lambda)\subset L(\Lambda_{\ell})^{\otimes k_{\ell}} \otimes \cdots \otimes L(\Lambda_1)^{\otimes k_1} \otimes L(\Lambda_0)^{\otimes k_0},\end{align*} and we use a highest weight vector \begin{align*} v_{\Lambda} = v_{\Lambda_{\ell}}^{\otimes k_{\ell}} \otimes \cdots \otimes v_{\Lambda_1}^{\otimes k_1} \otimes v_{\Lambda_0}^{\otimes k_0}. \end{align*}

\subsection{VOA construction for fundamental modules}

We now recall the basic facts about the Frenkel-Kac vertex operator algebra construction \cite{FK,S} of level one $\tilde{\mathfrak{g}}$-modules $L(\Lambda_0), \dots, L(\Lambda_{\ell})$, for our notation and details see \cite{DL, FLM}.

Define $V_P= M(1)\otimes \mathbb{C}[P]$, a module for $\hat{\mathfrak{h}} = \mathfrak{h}\otimes \mathbb{C}[t,t^{-1}] \oplus \mathbb{C} c$, with $M(1)=U(\hat{\mathfrak{h}}) \otimes_{U(\mathfrak{h} \otimes \mathbb{C}[t] \oplus \mathbb{C}c)} \mathbb{C}$ being the Fock space and $\mathbb{C}[P]$ the group algebra of the weight lattice $P$ with basis $\{ e^{\lambda} \mid \lambda \in P\}$.

Define also $V_Q=M(1)\otimes \mathbb{C}[Q]$, with $\mathbb{C}[Q]$ being the group algebra of the root lattice. There is a natural structure of simple vertex operator algebra on $V_Q$, with vertex operators which may be defined on $V_P$: \begin{align} \label{vertex} Y(1 \otimes e^{\lambda},z)=E^{-}(-\lambda, z) E^{+}(-\lambda, z)\otimes e^{\lambda}z^{\lambda}\epsilon_{\lambda}, \ \ \lambda \in P, \end{align} where $E^{\pm}(\lambda, z) = \exp \Big( \sum_{m \geq 1} \lambda (\pm m) \frac{z^{\mp m}}{\pm m} \Big)$, and for $\lambda, \mu \in P$ let $z^{\lambda} e^{\mu} = z^{\langle \lambda , \mu \rangle} e^{\mu}$, $\epsilon_{\lambda}e^{\mu}= \epsilon(\lambda, \mu) e^{\mu}$, where $\epsilon$ is a certain 2-cocyle on $P$ (cf. \cite{DL, FLM}).

Using \eqref{vertex}, one can extend the action of $\hat{\mathfrak{h}}$ on $V_P$ to the action of $\tilde{\mathfrak{g}}$ by letting $x_{\alpha} \otimes t^m$, $\alpha \in R$, $m \in \mathbb{Z}$, act as a corresponding coefficient of $Y(1\otimes e^{\alpha}, z)$ via \begin{align} \label{gaction} x_{\alpha}(z) = \sum_{m \in \mathbb{Z}} x_{\alpha}(m)z^{-m-1} = Y(1\otimes e^{\alpha}, z). \end{align} This action then gives $V_Qe^{\omega_i} \cong L(\Lambda_i)$ with highest weight vectors $v_{\Lambda_i} = 1 \otimes e^{\omega_i}$, $i=0, \dots, \ell$, and $V_P = \oplus_{i=0}^{\ell} L(\Lambda_i)$.

By using vertex operator formula \eqref{vertex} it is easy to see that on fundamental $\tilde{\mathfrak{g}}$-modules we have relations \begin{align*} x_\alpha(z)x_\beta(z)=0\quad\text{if}\quad\langle\alpha,\beta\rangle\geq 1. \end{align*} By using tensor product of $k$ fundamental modules we obtain VOA relations for level $k$ standard $\tilde{\mathfrak{g}}$-modules, see \eqref{kplusone} bellow.

For $\alpha \in R$ and $\lambda \in P$ \eqref{vertex} gives \begin{align}\label{commutator00} x_\alpha(z)e\sp\lambda= \epsilon(\alpha,\lambda)\, z\sp{\langle \alpha, \lambda \rangle}e\sp\lambda x_\alpha(z). \end{align} By comparing coefficients we get \begin{align*} x_{\alpha}(m)e^{\lambda}& = \epsilon(\alpha,\lambda)e^{\lambda} x_{\alpha} (m + \langle \alpha, \lambda \rangle). \end{align*}

\section{Feigin-Stoyanovsky's type subspaces $W(\Lambda)$}

\subsection{Definition}

In this section we introduce the main object of interest, i.e., a certain vector subspace of given standard $\tilde{\mathfrak{g}}$-module $L(\Lambda)$ with highest weight $\Lambda = k_0 \Lambda + \dots + k_{\ell}\Lambda_{\ell}$.

For the fixed minuscule weight $\omega = \omega_{\ell}$ we have $\mathfrak{g}$ with $\mathbb{Z}$-grading \begin{align*}\mathfrak{g} = \mathfrak{g}_{-1} + \mathfrak{g}_0 + \mathfrak{g}_1 =\sum_{\omega(\alpha) = -1} \mathfrak{g}_{\alpha} + \Big( \mathfrak{h} + \sum_{\omega(\alpha)=0} \mathfrak{g}_{\alpha} \Big)+ \sum_{\omega(\alpha) = 1} \mathfrak{g}_{\alpha}. \end{align*} We then also have the associated $\mathbb{Z}$-grading on $\tilde{\mathfrak{g}}$, \begin{align*} \tilde{\mathfrak{g}} = \tilde{\mathfrak{g}}_{-1} + \tilde{\mathfrak{g}}_0 + \tilde{\mathfrak{g}}_1 = \mathfrak{g}_{-1}\otimes\mathbb{C}[t,t^{-1}] + (\mathfrak{g}_0 \otimes \mathbb{C}[t,t^{-1}]\oplus \mathbb{C} c \oplus \mathbb{C}d) + \mathfrak{g}_{1}\otimes\mathbb{C}[t,t^{-1}]. \end{align*} Note that $\tilde{\mathfrak{g}}_1$ is a commutative subalgebra and a $\tilde{\mathfrak{g}}_0$-module. The set of roots \begin{align*}\Gamma= \{ \alpha \in R \mid \omega(\alpha)=1 \} = \{ \gamma_1, \gamma_2, \dots , \gamma_{\ell} \mid \gamma_i= \alpha_i + \dots + \alpha_{\ell} \} \end{align*} we shall also call the set of colors. Note that $\langle\gamma_i,\gamma_j\rangle=1+\delta_{ij}$ and that $\tilde{\mathfrak{g}}_1$ is spanned by elements of $\tilde{\mathfrak{g}}$ of the form $x_{\gamma} (n)$, $\gamma \in \Gamma$, $n \in \mathbb{Z}$. Later on we shall call $x_{\gamma} (n)$ a particle of color $\gamma$, charge one and degree $n$.

\begin{definition}
For a standard $\tilde{\mathfrak{g}}$-module $L(\Lambda)$ and a given $\mathbb{Z}$-grading on $\tilde{\mathfrak{g}}$ we define its Feigin-Stoyanovsky's type subspace $W(\Lambda)$ as \begin{align*} W(\Lambda) = U(\tilde{\mathfrak{g}}_1) \cdot v_{\Lambda}.\end{align*}
\end{definition}

Poincar\'{e}-Birkhoff-Witt theorem implies that $W(\Lambda)$ is spanned by the following set of particle monomials acting on highest weight vector $v_{\Lambda}$: \begin{align} \label{span} \{ b \cdot v_{\Lambda} \ | \ b = \dots x_{\gamma_1}(-2)^{a_{\ell}} x_{\gamma_{\ell}}(-1)^{a_{\ell-1}} \cdots x_{\gamma_1}(-1)^{a_0},\ a_i \in \mathbb{Z}_{+}, \ i \in \mathbb{Z}_{+}\}. \end{align}

\subsection{Bases in admissible configurations}

By using VOA relations on level $k$ standard $\tilde{\mathfrak{g}}$-modules \begin{align} \label{kplusone} x_{\beta_1}(z) \cdots x_{\beta_{k+1}}(z)=0, \ \ \beta_1, \dots, \beta_{k+1} \in \Gamma,
\end{align} \eqref{span} can be reduced to combinatorial basis for $W(\Lambda)$ in monomial vectors given by action of so-called $(k,\ell+1)$-admissible monomials on $v_{\Lambda}$ (cf. \cite{FJLMM,
P1, P2}):

\begin{definition}
A sequence of non-negative integers $(a_i)_{i=0}^{\infty}$ with finitely many non-zero terms will be called a configuration. We call $(a_i)_{i=0}^{\infty}$ a $(k, \ell+1)$-admissible configuration (for $\Lambda$) if the so-called initial conditions \begin{align} \nonumber a_0 &\leq k_0 \\ \label{initial} a_0+a_1 &\leq k_0 + k_1 \\ \nonumber \dots \\ \nonumber a_0 + a_1 + \dots + a_{\ell-1} &\leq k_0 + \dots + k_{\ell-1}, \end{align} and difference conditions \begin{align} \label{difference} a_i + \dots + a_{i+\ell} \leq k, \quad i \in \mathbb{Z}_{+} \end{align} are met. In this case a monomial $b$ of form \eqref{span}, as well as the accompanying monomial vector $b \cdot v_{\Lambda}$ are said to be $(k, \ell+1)$-admissible.
\end{definition}

\begin{theorem}
\label{thm_basis} Basis for $W(\Lambda)$ is given by $(k, \ell+1)$-admissible monomial vectors.
\end{theorem}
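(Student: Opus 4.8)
The statement packages two assertions --- that the $(k,\ell+1)$-admissible monomial vectors span $W(\Lambda)$, and that they are linearly independent --- and I would prove them separately, the linear independence being by far the harder half.

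For spanning, the plan is to fix a linear order $\prec$ on the particle monomials of \eqref{span}, comparing first by total degree (equivalently by the $\tilde{\mathfrak g}$-weight of the monomial vector) and, within a fixed degree, by the sequence of exponents $(a_i)$ arranged so that moving a particle to a strictly later slot --- strictly lower degree, or the same degree and a later color --- strictly increases the monomial. One then reduces the Poincar\'e--Birkhoff--Witt spanning set \eqref{span} by induction on $\prec$ inside each fixed weight space, where the monomial set is finite. A monomial violating a difference condition \eqref{difference}, say $a_i+\dots+a_{i+\ell}\ge k+1$, contains at least $k+1$ particles whose slots lie in the window $i,\dots,i+\ell$; for an appropriate choice of colors $\beta_1,\dots,\beta_{k+1}\in\Gamma$ (the colors of $k+1$ of those particles, with multiplicity) and of $n$, the coefficient of the corresponding power of $z$ in the level-$k$ relation \eqref{kplusone} yields an identity $\sum_{n_1+\dots+n_{k+1}=n}x_{\beta_1}(n_1)\cdots x_{\beta_{k+1}}(n_{k+1})=0$ whose $\prec$-minimal term, after freely reordering the commuting factors of $\tilde{\mathfrak g}_1$, is exactly the offending sub-monomial; solving for it rewrites the given monomial vector as a combination of strictly $\prec$-larger ones. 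A monomial violating an initial condition \eqref{initial} is in fact zero: using $x_{\gamma}(m)v_{\Lambda_j}=0$ for $m\ge 0$ together with the commutation relation \eqref{commutator00}, one checks that on a single level-one factor $L(\Lambda_j)$ at most one particle of degree $-1$ can be absorbed, and only for colors $\gamma_i$ with $i>j$; hence across the $k_0+\dots+k_{\ell-1}$ relevant factors of $L(\Lambda_{\ell})^{\otimes k_{\ell}}\otimes\cdots\otimes L(\Lambda_0)^{\otimes k_0}$ a degree-$(-1)$ product with $a_0+\dots+a_{r-1}>k_0+\dots+k_{r-1}$ already annihilates $v_\Lambda$, so the whole monomial vector vanishes. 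Since each weight space contains only finitely many monomials, the induction terminates with a combination of admissible monomial vectors.

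For linear independence I would use the vertex-operator method. Embed $L(\Lambda)$ in $L(\Lambda_{\ell})^{\otimes k_{\ell}}\otimes\cdots\otimes L(\Lambda_0)^{\otimes k_0}$, suppose $\sum_b c_b\,b\,v_\Lambda=0$ is a nontrivial relation among admissible monomial vectors, and pick the $\prec$-minimal $b_0$ with $c_{b_0}\ne0$. The goal is to produce, from coefficients of the vertex operators $x_\gamma(z)$ of \eqref{gaction}, from the shift operators $e^\lambda$ of \eqref{commutator00}, and from projections onto $\mathfrak h$-weight subspaces of the individual tensor factors, an operator sending $b_0\,v_\Lambda$ to a nonzero vector while annihilating --- or pushing strictly past $b_0$'s image in the induced order --- every other monomial vector in the relation, forcing $c_{b_0}=0$. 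Concretely one peels particles off from the bottom, one color and one degree-level at a time: the weight-space projection distributes the $a_i$ particles of a given color and degree among the $k$ tensor factors, placing on each factor only as many as the corresponding fundamental module can hold, and the inequalities \eqref{initial} and \eqref{difference} are exactly the conditions guaranteeing that the extreme monomial $b_0$ admits such a distribution and that no competing admissible monomial produces the same leading image; iterating the peeling brings one down to the vacuum.

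The main obstacle is this last, combinatorial, step of the linear-independence argument: one must verify that the peeling operators and the weight-space projections are simultaneously compatible with the defining inequalities of $(k,\ell+1)$-admissibility, so that the process never leaves the admissible set and the $\prec$-minimal term remains isolated as the unique surviving leading contribution. This is precisely where the structure of admissible configurations is used in an essential way, and it is also the step that the quasi-particle reformulation in the present paper is designed to make more transparent.
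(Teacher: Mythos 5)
You should first note that the paper does not actually prove Theorem~\ref{thm_basis}: it is quoted from the literature (cf.\ \cite{FJLMM,P1,P2}), and the paper's own detailed proof is of the analogous quasi-particle statement, Theorem~\ref{theorem}. Measured against that, your spanning half is essentially the standard argument and is fine in outline: the treatment of the initial conditions \eqref{initial} (at most one degree $-1$ particle per level-one factor, and only of color $\gamma_i$ with $i>j$ on $L(\Lambda_j)$, then pigeonhole over the $k_0+\dots+k_{r-1}$ admissible factors) is exactly the computation the paper records in \eqref{init}--\eqref{dmaxone}, and the reduction of monomials violating \eqref{difference} by coefficients of \eqref{kplusone} is the standard induction. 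The one point in this half that is stated too casually is the claim that the offending sub-monomial is \emph{the} $\prec$-minimal term of the chosen coefficient identity: with your order (total degree first, then the exponent sequence) this extremality is precisely what has to be engineered by the choice of order and of the coefficient, and the paper's own analogues (Lemmas~\ref{lem_relations 3} and \ref{lem_relations 4}) do not rely on isolating a single leading term but on solving a regular linear system of relations. Still, this is a fixable presentational issue, not a conceptual one.

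The genuine gap is in the linear-independence half, which as written is a program rather than a proof. You never construct the operator whose existence the argument needs; you list its desired properties (send $b_0v_\Lambda$ to something nonzero, annihilate or push past all competitors) and then concede that verifying the compatibility of the peeling steps and the weight-space projections with $(k,\ell+1)$-admissibility is ``the main obstacle.'' That verification \emph{is} the proof: in the paper's independence argument for Theorem~\ref{theorem} the corresponding work is the explicit use of the intertwining coefficient $[1]$ from \eqref{coeff_intertwining}, the shifts $e^{\lambda_1}$ via \eqref{commutator11}, the operators $(1)_{n}$, and, crucially, the line-by-line check that the modified monomials $b'$, $b''$ stay inside $\frak{B}_{W(\Lambda)}$ so the induction can continue. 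Moreover, the specific mechanism you propose --- a Georgiev-type projection distributing the particles among the $k$ level-one tensor factors --- is exactly the delicate device: the paper makes it work only for $\mathfrak{sl}(3,\mathbb{C})^{\widetilde{}}$ and only for $\Lambda=k_0\Lambda_0+k_1\Lambda_1$ or $k_1\Lambda_1+k_2\Lambda_2$, and Remark~\ref{further_directions} explains why such projection tricks break down beyond this range. Since Theorem~\ref{thm_basis} is asserted for all $\ell$ and all dominant $\Lambda$, your plan would have to be replaced (as in the cited proofs \cite{P2,T2,FJLMM}) by inductive arguments with coefficients of intertwining operators acting on the whole module rather than a factor-by-factor projection; as it stands, the independence half is missing its key construction, and the construction you sketch is not known to run in the generality the theorem requires.
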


\subsection{Initial conditions}

We elaborate a bit further on initial conditions \eqref{initial} for $\Lambda$. Using \eqref{gaction} it is not hard to calculate that \begin{align} \label{init} & x_{\gamma_i}(-1)v_{\Lambda_j} = \left\{ \begin{array}{l} 0 \quad \text{if} \quad i \leq j, \\ Ce^{\gamma_i}v_{\Lambda_j}\neq 0, \ C \in \mathbb{C}^{\times}\quad \text{if} \quad i > j \end{array}\right. \end{align} and \begin{align} \label{init2} x_{\gamma_i}(-2)v_{\Lambda_j} = C'e^{\gamma_i}v_{\Lambda_j}\neq 0, \ C' \in \mathbb{C}^{\times}\quad \text{if} \quad i \leq j, \end{align} which is a consequence of the fact that $\langle \gamma_i, \omega_j \rangle = 0$ if and only if $i > j$. Stated otherwise, for particle of color $\gamma_i$ and charge one we may find the maximal degree $d_{\max}(\gamma_i, \Lambda_j)$ such that $x_{\gamma_i}(d_{\max}(\gamma_i, \Lambda_j))v_{\Lambda_j}\neq 0$: \begin{align} \label{dmaxone} d_{\max}(\gamma_i, \Lambda_j) = \left\{ \begin{array}{l} -2 \quad \text{if} \quad i \leq j, \\ -1 \quad \text{if} \quad i > j. \end{array}\right.
\end{align}

Using \eqref{kplusone} we get $x_{\beta_2}(-1)x_{\beta_1}(-1)v_{\Lambda_j}=0$ for $j = 0, \dots, \ell$, which taken together with \eqref{init} give $\dots x_{\gamma_{\ell}}(-1)^{a_{\ell-1}} \cdots x_{\gamma_1}(-1)^{a_0} v_{\Lambda} \neq 0$ if and only if \eqref{initial} holds.

\section{Quasi-particle bases of $W(\Lambda)$ for $\mathfrak{sl}(3,\mathbb{C})^{\widetilde{}}$}

In a sharp contrast with the above mentioned general construction of monomial bases for arbitrary $W(\Lambda)$, we know fermionic-type character formulas for higher level modules only for affine Lie
algebra $\tilde{\mathfrak{g}}=\mathfrak{sl}(3,\mathbb{C})^{\widetilde{}}$ (cf. \cite{FJMMT1, FJMMT2, J1, J2, J3}). In fact, we have not been able to guess a correct formula even for the level two subspace $W(2\Lambda_0)$ for $\tilde{\mathfrak{g}}=\mathfrak{sl}(4,\mathbb{C})^{\widetilde{}}$. In this section we propose building new combinatorial bases of $W(\Lambda)$ for $\mathfrak{sl}(3,\mathbb{C})^{\widetilde{}}$ in so-called quasi-particles (cf. \cite{FS, G}) from which the known fermionic-type formulas follow naturally. Our arguments are parallel to Georgiev's and, as in \cite{G}, they work only for certain classes of standard modules---in our case for $\Lambda = k_0 \Lambda_0 + k_1 \Lambda_1$ or $\Lambda = k_1 \Lambda_1 + k_2 \Lambda_2$. We hope that this construction will give us a better insight of what kind of fermionic-type formulas for Feigin-Stoyanovsky's type subspaces we may or we may not expect in higher rank cases.

\subsection{Definition of quasi-particles}

For $i=1,2$ define a quasi-particle of color $\gamma_i$, charge $n$ and degree $m$ by \begin{align*} x_{n\gamma_i}(m) = \sum_{\genfrac{}{}{0pt}{}{m_1,\dots, m_n \in \mathbb{Z}}{m_1+\dots + m_n = m}}
x_{\gamma_i} (m_n) \cdots x_{\gamma_i}(m_1). \end{align*} The corresponding generating function will be denoted by \begin{align*} x_{n\gamma_i}(z) = x_{\gamma_i}(z)^n = \sum_{m \in \mathbb{Z}} x_{n \gamma_i}(m) z^{-m-n}. \end{align*} We assume $n \leq k$ because VOA relation \eqref{kplusone} implies that quasi-particles with charge $n\geq k+1$ are zero on any standard level $k$ module.

\subsection{Initial conditions for quasi-particles}

For $\Lambda = k_0 \Lambda_0 + k_1 \Lambda_1$ or $k_1 \Lambda_1 + k_2 \Lambda_2$ set \begin{align} \label{dmaxmore} d_{\max}(n\gamma_1, \Lambda)&= -\min \{ n, k_0 \} - 2\max \{ n- k_0,0 \}, \\ \nonumber d_{\max}(n\gamma_2, \Lambda) & = - \min \{ n, k_0 + k_1 \} - 2\max \{ n - (k_0 + k_1), 0 \}. \end{align} By using \eqref{dmaxone} it is easy to see that $m>d_{\max}(n\gamma_i, \Lambda)$ implies \
$x_{n\gamma_i}(m)v_\Lambda=0$.

\subsection{Linear order among quasi-particles}

We define the following linear order $\prec$ among quasi-particles: from the usual order on $R$ we have $\gamma_2 < \gamma_1$, and then state that $x_{n_2\beta_2}(m_2) \prec x_{n_1\beta_1}(m_1)$ if one of the following conditions holds:
\begin{itemize}
\item[(1)] $\beta_2 < \beta_1$,
\item[(2)] $\beta_2 = \beta_1$, $n_2 < n_1$,
\item[(3)] $\beta_2 = \beta_1$, $n_2 = n_1$, $m_2 < m_1$.
\end{itemize}
Therefore, two quasi-particles are compared first by color, then by charge, and finally by degree.

Next, extend the order $\prec$ to quasi-particle monomials. First, organize a quasi-particle monomial so that its commuting quasi-particles are written starting from right in a non-increasing manner. For two quasi-particle monomials then set \begin{align*} N = N_{n_2}\cdots N_{1} \prec M=M_{n_1}\cdots M_{1} \end{align*} if there exists $i_0 \in \mathbb{N}$ such that $N_{i} = M_{i}$ for all $i < i_0$, and either $N_{i_0} \prec M_{i_0}$ or $i_0 = n_1 + 1 \leq n_2$. This linear order agrees with multiplication of quasi-particle monomials, i.e. for given quasi-particle monomial $K$ the inequality $N\prec M$ implies $NK\prec MK$.

\begin{definition}
\label{cct} For a quasi-particle monomial \begin{align*} x_{n_{2,b} \gamma_2} (m_{2,b}) \cdots x_{n_{2,1}\gamma_2}(m_{2,1}) x_{n_{1,a}\gamma_1}(m_{1,a}) \cdots x_{n_{1,1}\gamma_1}(m_{1,1})
\end{align*} we will say it is of color-charge type $(n_{2,b},\dots, n_{2,1}; n_{1,a}, \dots, n_{1,1})$. Also, as in \cite{G}, it may be convenient to also use a notion of color-dual-charge type \begin{align*} (r_2\sp{(1)}, r_2\sp{(2)},\dots, r_2\sp{(k)}; r_1\sp{(1)}, r_1\sp{(2)},\dots, r_1\sp{(k)}), \end{align*} where $r_i\sp{(1)}\geq r_i\sp{(2)}\geq \dots \geq r_i\sp{(k)}$ and for $i=1,2$ the number of quasi-particles of color $\gamma_i$ and charge $n$ equals $r_i\sp{(n)}-r_i\sp{(n+1)}$.
\end{definition}

\subsection{Relations among quasi-particles}

Here we establish relations among quasi-particles which will help us reduce to a basis the spanning set for $W(\Lambda)$ consisting of quasi-particle monomial vectors \begin{align}\label{quasi-particle monomial vectors} x_{n_{2,b} \gamma_2} (m_{2,b}) \cdots x_{n_{2,1}\gamma_2}(m_{2,1}) x_{n_{1,a}\gamma_1}(m_{1,a}) \cdots x_{n_{1,1}\gamma_1}(m_{1,1})v_\Lambda. \end{align}

\begin{lemma}
\label{lem_relations 1} Let color $\gamma_i$, $i\in\{1,2\}$, be fixed and $n_2 \leq n_1$. Then for $N=0,\dots, 2n_2-1$ we have relations among generating functions of quasi-particles of the form \begin{align} \label{same} & \Big( \frac{d^N}{dz^N} x_{n_2 \gamma_i}(z) \Big) x_{n_1 \gamma_i} (z) = A_N(z) x_{(n_1+1)\gamma_i}(z) + B_N(z) \frac{d^N}{dz^N} x_{(n_1+1)\gamma_i}(z) \end{align} for some formal series $A_N(z)$ and $B_N(z)$ with coefficients in the set of quasi-particle polynomials.
\end{lemma}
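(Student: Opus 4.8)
The plan is to reduce everything to the single-color vertex-operator identity
$x_{\gamma_i}(z)^{2}=0$, valid on level-one modules, and its consequences on higher-level modules coming from \eqref{kplusone}. Writing $x_{n\gamma_i}(z)=x_{\gamma_i}(z)^{n}$, the left-hand side of \eqref{same} is, up to the Leibniz expansion of the derivative, a sum of products of the form $x_{\gamma_i}(z)^{(j_1)}\cdots$ acting together with $x_{\gamma_i}(z)^{n_1}$, i.e. a differential polynomial in $x_{\gamma_i}(z)$ of total charge $n_1+n_2$ in which at least $n_1$ of the factors are undifferentiated. First I would set up notation for such ``charge-$m$ differential monomials'' in a single color and record the basic syntactic fact: because $x_{\gamma_i}(z)^{2}=0$ on level one, on a level-$k$ module any product of $k+1$ factors $x_{\gamma_i}(z)$ (differentiated or not), distributed across the $k$ tensor slots, must vanish; more useful here, on any standard module one has the relation
$\bigl(\tfrac{d^{N}}{dz^{N}}x_{\gamma_i}(z)^{a}\bigr)x_{\gamma_i}(z)^{b}
 = (\text{differential polynomial})\cdot x_{\gamma_i}(z)^{a+b}$ for suitable $N$, because differentiating cannot raise the ``multiplicity'' of coincident arguments beyond what the level permits. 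The precise mechanism is the classical one from \cite{FS, G}: expand $x_{\gamma_i}(z_1)\cdots x_{\gamma_i}(z_{n_1+n_2})$, use that it is divisible by $\prod_{p<q}(z_p-z_q)$ on a level-$k$ module only up to order dictated by $k$, then specialize all $z_p\to z$ after extracting the appropriate Taylor coefficients.

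Concretely I would proceed by induction on $N$. For $N=0$ the identity is just $x_{n_2\gamma_i}(z)x_{n_1\gamma_i}(z)=x_{(n_1+n_2)\gamma_i}(z)$, which is immediate from $x_{n\gamma_i}(z)=x_{\gamma_i}(z)^{n}$; but the statement wants the right-hand side written in terms of $x_{(n_1+1)\gamma_i}(z)$ and its $N$-th derivative, so already here I must argue that $x_{(n_1+n_2)\gamma_i}(z)=x_{\gamma_i}(z)^{n_2-1}\,x_{(n_1+1)\gamma_i}(z)$, i.e. take $A_0(z)=x_{\gamma_i}(z)^{n_2-1}$, $B_0(z)=0$. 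For the inductive step, apply $\tfrac{d}{dz}$ to the relation for $N-1$: the left side becomes $\bigl(\tfrac{d^{N}}{dz^{N}}x_{n_2\gamma_i}(z)\bigr)x_{n_1\gamma_i}(z)$ plus a cross term $\bigl(\tfrac{d^{N-1}}{dz^{N-1}}x_{n_2\gamma_i}(z)\bigr)\tfrac{d}{dz}x_{n_1\gamma_i}(z)$, and the right side differentiates into new $A,B$ times $x_{(n_1+1)\gamma_i}(z)$ and its first and $N$-th derivatives. The cross term and the stray $\tfrac{d}{dz}x_{(n_1+1)\gamma_i}(z)$ must then be absorbed: here is where one needs $N\le 2n_2-1$, because $\tfrac{d}{dz}x_{(n_1+1)\gamma_i}(z)=\tfrac{d}{dz}\bigl(x_{\gamma_i}(z)\,x_{n_1\gamma_i}(z)\bigr)$ and one rewrites $\tfrac{d}{dz}x_{n_1\gamma_i}(z)$ using lower-$N$ instances of \eqref{same} with the roles balanced so that at most $2n_2$ coincident-argument constraints are invoked—any $N\ge 2n_2$ would force a vanishing that we cannot use to re-express, only to kill terms. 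I would make this bookkeeping precise by tracking, for each monomial appearing, the partition recording how many factors carry how many derivatives, and checking that the ``excess derivative weight'' stays $\le 2n_2-1$.

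The main obstacle is exactly this last bookkeeping: showing that the cross terms generated by the induction close up into the two-term shape $A_N x_{(n_1+1)\gamma_i}+B_N\,\tfrac{d^{N}}{dz^{N}}x_{(n_1+1)\gamma_i}$ rather than producing an ever-growing list of intermediate derivatives $\tfrac{d^{j}}{dz^{j}}x_{(n_1+1)\gamma_i}(z)$ for $0<j<N$. The resolution is that each such intermediate derivative can itself be re-expanded—via the Leibniz rule back in terms of $x_{\gamma_i}(z)$ and then re-collected—using the already-established cases of the lemma for smaller $N$, and the bound $N\le 2n_2-1$ is precisely what guarantees none of these re-expansions require a relation we do not have. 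I would therefore organize the proof as a simultaneous induction on $(n_2,N)$ with $N\le 2n_2-1$, treating the base color-one case $n_2=1$ (where $N\le 1$) by hand directly from $x_{\gamma_i}(z)^{2}=0$, and then lifting. It may be cleanest to first prove the ``all $z_p\to z$ specialization'' statement at the level of the multivariable series $x_{\gamma_i}(z_1)\cdots x_{\gamma_i}(z_{n_1+n_2})$ on a level-$k$ module and only afterwards read off \eqref{same} as the special case where $n_2$ of the variables are grouped and Taylor-expanded around a common point; that route hides the combinatorics inside the known divisibility properties of the vertex-operator product and makes the derivative count transparent.
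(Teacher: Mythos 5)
Your proposal has a genuine gap at exactly the point you flag as ``the main obstacle,'' and the machinery you bring in to close it is not the right machinery for this lemma. First, the identity \eqref{same} is a purely formal statement in the commutative algebra generated by the modes of $x_{\gamma_i}(z)$: no level-$k$ vanishing relation such as \eqref{kplusone}, no $x_{\gamma_i}(z)^2=0$ on level-one modules, and no divisibility by $\prod_{p<q}(z_p-z_q)$ enters. (Those relations drive the \emph{other} relation lemma, Lemma~\ref{lem_relations 2}, about differently colored quasi-particles.) Consequently your explanation of the bound $N\le 2n_2-1$ --- that larger $N$ ``would force a vanishing that we cannot use to re-express'' --- misidentifies where the bound comes from. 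Second, the induction on $N$ you set up does not close as described: differentiating the case $N-1$ produces the cross term $\bigl(\tfrac{d^{N-1}}{dz^{N-1}}x_{n_2\gamma_i}(z)\bigr)\tfrac{d}{dz}x_{n_1\gamma_i}(z)$, and $\tfrac{d}{dz}x_{n_1\gamma_i}(z)=n_1\,x_{\gamma_i}'(z)\,x_{\gamma_i}(z)^{n_1-1}$ has the wrong shape to be fed back into lower instances of \eqref{same} (the second factor now has charge $n_1-1$ and carries a derivative); you acknowledge this bookkeeping problem but only assert, without an argument, that it resolves.

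The intended argument is a one-step pigeonhole that makes any induction unnecessary. Expand $\tfrac{d^N}{dz^N}x_{n_2\gamma_i}(z)=\tfrac{d^N}{dz^N}\,x_{\gamma_i}(z)^{n_2}$ by the Leibniz rule: each summand distributes $N$ derivative signs among the $n_2$ commuting factors. If every factor carried at least two derivatives one would need $N\ge 2n_2$; hence for $N\le 2n_2-1$ every summand contains a factor $x_{\gamma_i}(z)$ or a factor $\tfrac{d}{dz}x_{\gamma_i}(z)$. Peel that factor off and combine it with $x_{n_1\gamma_i}(z)=x_{\gamma_i}(z)^{n_1}$ via $x_{\gamma_i}(z)\,x_{n_1\gamma_i}(z)=x_{(n_1+1)\gamma_i}(z)$ and $\bigl(\tfrac{d}{dz}x_{\gamma_i}(z)\bigr)x_{n_1\gamma_i}(z)=\tfrac{1}{n_1+1}\tfrac{d}{dz}x_{(n_1+1)\gamma_i}(z)$; the remaining factors of each summand constitute the coefficient series $A_N(z)$ and $B_N(z)$. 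This is exactly where the bound $2n_2-1$ originates, and your worry about an ever-growing list of intermediate derivatives evaporates because no recursion is performed. (Note also that this direct argument produces the first derivative of $x_{(n_1+1)\gamma_i}(z)$ in the second term; that is all that is needed in the application to Lemma~\ref{lem_relations 3}.)
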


\begin{proof}
Considering \eqref{same}, it suffices to show that for every $N=0,\dots, 2n_2 -1$ each summand in the expansion of $\frac{d^N}{dz^N} x_{n_2 \gamma_i}(z)$ contains either $x_{\gamma_i}(z)$ or $\frac{d}{dz} x_{\gamma_i}(z)$, and use the fact that \begin{align*} \Big( \frac{d}{dz} x_{\gamma_i}(z) \Big) \cdot x_{n_1 \gamma_i} (z) = \frac{1}{n_1 + 1} \frac{d}{dz} x_{(n_1 + 1)\gamma_1}(z).
\end{align*} But, since $x_{n_2 \gamma_i}(z) = x_{\gamma_i}(z)^{n_2}$, differentiating this product $N$ times corresponds to distributing derivation signs among its $n_2$ factors, which means that the highest possible $N$ for which there would not exist a summand with each factor differentiated more than once is $N = 2n_2 - 1$.
\end{proof}

\begin{lemma}
\label{lem_relations 2} For differently colored quasi-particles $x_{n_1 \gamma_1}(z)$ and $x_{n_2 \gamma_2}(z)$ such that $n_1 + n_2 \geq k+1$ we have for $N=0,\dots, n_1+n_2-k-1$ the following relations: \begin{align} \label{different} \Big( \frac{d^N}{dz^N} x_{n_2 \gamma_2}(z) \Big) \cdot x_{n_1 \gamma_1} (z)=0. \end{align}
\end{lemma}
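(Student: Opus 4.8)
The plan is to derive \eqref{different} directly from the level-$k$ VOA relation \eqref{kplusone}, exploiting the special structure of the $A_2$ colors, namely $\langle\gamma_1,\gamma_2\rangle=1$ and $\langle\gamma_i,\gamma_i\rangle=2$. First I would recall that on a level $k$ module any product $x_{\beta_1}(z)\cdots x_{\beta_{k+1}}(z)$ with $\beta_j\in\Gamma=\{\gamma_1,\gamma_2\}$ vanishes. Writing $x_{n_2\gamma_2}(z)\,x_{n_1\gamma_1}(z)=x_{\gamma_2}(z)^{n_2}x_{\gamma_1}(z)^{n_1}$ as a product of $n_1+n_2\geq k+1$ single currents, the relation \eqref{kplusone} already gives the $N=0$ case $x_{n_2\gamma_2}(z)x_{n_1\gamma_1}(z)=0$ outright. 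The point of the lemma is that one may apply up to $n_1+n_2-k-1$ derivatives in $z$ and still get zero; equivalently, $x_{n_2\gamma_2}(z)x_{n_1\gamma_1}(z)$ vanishes to order $n_1+n_2-k$ at every point, i.e. it is divisible by a suitable power of an expression measuring the ``collision'' of the two currents.

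The cleanest route is to use the standard fact (from the Frenkel--Kac / lattice VOA formalism, cf.\ \cite{DL,FLM}) that for roots $\alpha,\beta$ with $\langle\alpha,\beta\rangle\geq 0$ the product $Y(e^\alpha,z_1)Y(e^\beta,z_2)$ is, after multiplying by $(z_1-z_2)^{\langle\alpha,\beta\rangle}$, regular at $z_1=z_2$, and that more generally a product of several such currents, multiplied by the product of the pairwise factors $(z_i-z_j)^{\langle\beta_i,\beta_j\rangle}$, is a regular (in fact holomorphic in each variable) expression. Specializing all variables to a common $z$, the product $x_{\gamma_2}(z)^{n_2}x_{\gamma_1}(z)^{n_1}$ acquires a zero whose order is at least the total exponent coming from those pairwise factors that survive the specialization in a constrained way; concretely, on a level $k$ module the vanishing statement \eqref{kplusone} combined with counting the $\binom{n_1}{2}+\binom{n_2}{2}+n_1n_2$ pairwise collisions against the $k$ ``allowed'' slots yields that $x_{n_2\gamma_2}(z)x_{n_1\gamma_1}(z)$ vanishes to order $\geq n_1+n_2-k$. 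Hence all derivatives $\tfrac{d^N}{dz^N}$ for $N\leq n_1+n_2-k-1$ annihilate it.

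Alternatively, and perhaps more in keeping with the elementary tone of Lemma~\ref{lem_relations 1}, I would argue combinatorially on coefficients: expand $\tfrac{d^N}{dz^N}\big(x_{\gamma_2}(z)^{n_2}x_{\gamma_1}(z)^{n_1}\big)$ by the Leibniz rule as a sum of terms in which $N$ derivatives are distributed among the $n_1+n_2$ factors. Group the $n_1+n_2\geq k+1$ factors, after fixing an order, into blocks; since there are more than $k$ factors, every monomial appearing contains a sub-product of $k+1$ consecutive undifferentiated currents $x_{\beta}(z)\cdots x_{\beta'}(z)$ precisely when $N$ is small enough that the $N$ derivative marks cannot ``break up'' every length-$(k+1)$ run — and $N\leq n_1+n_2-k-1$ is exactly the threshold guaranteeing at least one untouched block of size $k+1$. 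Each such term is then zero by \eqref{kplusone}, so the whole sum vanishes.

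The main obstacle is making the counting in the second or third paragraph airtight: one must be careful that differentiating $x_\beta(z)$ does not merely shift a current but genuinely ``uses up'' a slot, and that the bookkeeping of which $(k+1)$-subsets remain fully undifferentiated is correct — in particular one needs the combinatorial claim that among $n_1+n_2$ positions, marking any $N\leq n_1+n_2-k-1$ of them always leaves $k+1$ unmarked positions (which is immediate, since $n_1+n_2-N\geq k+1$), together with the observation that a product of the remaining currents, possibly interspersed with differentiated ones, still contains an honest factor $x_{\beta_1}(z)\cdots x_{\beta_{k+1}}(z)$ to which \eqref{kplusone} applies after commuting differentiated currents past it (which is harmless since all $x_\gamma(z)$, $\gamma\in\Gamma$, and their derivatives mutually ``commute as generating functions'' in the sense already used in the proof of Lemma~\ref{lem_relations 1}). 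Once that is set up, the conclusion \eqref{different} follows termwise.
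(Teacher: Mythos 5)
Your elementary (third-paragraph) route contains the right ingredients but, as written, differentiates the wrong expression: you expand $\tfrac{d^N}{dz^N}\big(x_{\gamma_2}(z)^{n_2}x_{\gamma_1}(z)^{n_1}\big)$ by Leibniz over all $n_1+n_2$ factors, i.e.\ you prove $\tfrac{d^N}{dz^N}\big(x_{n_2\gamma_2}(z)x_{n_1\gamma_1}(z)\big)=0$. That identity is trivially true (the $N=0$ product already vanishes by \eqref{kplusone} since $n_1+n_2\geq k+1$, and derivatives of $0$ are $0$) and does not by itself give \eqref{different}, whose whole content is that the derivative is applied \emph{only} to $x_{n_2\gamma_2}(z)$. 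The repair is immediate and is exactly the paper's proof: apply Leibniz only inside $x_{\gamma_2}(z)^{n_2}$. Distributing $N\leq n_1+n_2-k-1$ derivative marks among the $n_2$ factors leaves, in the worst case, $n_2-N=k+1-n_1$ undifferentiated factors $x_{\gamma_2}(z)$, which together with the $n_1$ untouched factors $x_{\gamma_1}(z)$ give at least $k+1$ undifferentiated currents in every summand; commuting the differentiated currents aside (harmless, since $\tilde{\mathfrak{g}}_1$ is commutative, as you observe) each summand dies by \eqref{kplusone}. Since your vanishing argument is termwise and the left-hand side of \eqref{different} is precisely the sub-sum of your Leibniz terms in which no derivative falls on a $\gamma_1$-factor, your argument does in substance cover the lemma --- but the counting must be stated for the correct expression, as above, rather than for the full product.

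Your second-paragraph route via the lattice construction is a genuinely different argument from the paper's and is viable, but its key count is garbled: the relevant quantity is not the total number $\binom{n_1}{2}+\binom{n_2}{2}+n_1n_2$ of pairwise collisions. The correct bookkeeping, in two variables and with $L(\Lambda)$ embedded in a tensor product of $k$ fundamental modules, is that each of the $n_2$ currents $x_{\gamma_2}(z_2)$ and each of the $n_1$ currents $x_{\gamma_1}(z_1)$ must occupy a distinct tensor slot within its own color (because $x_\gamma(w)^2=0$ on a fundamental module), so by pigeonhole at least $n_1+n_2-k$ slots receive one current of each color, and each such slot contributes a factor $(z_2-z_1)^{\langle\gamma_1,\gamma_2\rangle}=(z_2-z_1)$. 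Hence $x_{n_2\gamma_2}(z_2)x_{n_1\gamma_1}(z_1)$ is divisible by $(z_2-z_1)^{n_1+n_2-k}$, and all $\partial_{z_2}^N$ with $N\leq n_1+n_2-k-1$ vanish on the diagonal, which is \eqref{different}. Relatedly, your first-paragraph phrase that the product ``vanishes to order $n_1+n_2-k$'' only makes sense in this two-variable reading; as a statement about the one-variable product it is vacuous, since that product is identically zero.
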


\begin{proof}
Equality \eqref{different} is a consequence of relations \eqref{kplusone}. Similarly as in discussing \eqref{same}, it is easy to see that $N=n_1+n_2-k-1$ is the highest $N$ for which each summand in the expansion of the left-hand side of \eqref{different} has $k+1$ or more particles in product, because for this $N$ the least plausible way of distributing the derivation signs among factors of $x_{n_2 \gamma_2}(z)$ leaves $n_2- (n_1+n_2-k-1) = k+1 - n_1$ factors $x_{\gamma_2}(z)$ undifferentiated, which together with $x_{n_1 \gamma_1}(z)$ produces an equation of the form \eqref{kplusone}.
\end{proof}

\begin{lemma}
\label{lem_relations 3} Let color $\gamma_i$, $i\in\{1,2\}$, be fixed and $n_2 \leq n_1$. Let $r= 2n_2-1$ and let $m_1$ and $m_2$ be a pair of integers. Then any of $r+1$ monomials in the set \begin{align*} \mathcal D=\{x_{n_2 \gamma_i}(m_2)x_{n_1 \gamma_i}(m_1), \dots, x_{n_2 \gamma_i}(m_2-r)x_{n_1 \gamma_i}(m_1+r)\} \end{align*} can be expressed in terms of monomials from the set \begin{align} \label{all without unknowns} \{x_{n_2 \gamma_i}(m_2-j)x_{n_1 \gamma_i}(m_1+j)\mid j\in\mathbb Z\}\,\backslash\, \mathcal D \end{align} and monomials which have as a factor quasi-particle of color $\gamma_i$ and charge $n_1+1$.
\end{lemma}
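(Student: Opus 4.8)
The plan is to extract the statement of Lemma~\ref{lem_relations 3} from the generating-function identity \eqref{same} of Lemma~\ref{lem_relations 1} by comparing coefficients. Fix the color $\gamma_i$ and the charges $n_2\le n_1$, put $r=2n_2-1$, and for each $N=0,\dots,r$ rewrite \eqref{same} as
\begin{align*}
\Big(\tfrac{d^N}{dz^N}x_{n_2\gamma_i}(z)\Big)x_{n_1\gamma_i}(z)-B_N(z)\tfrac{d^N}{dz^N}x_{(n_1+1)\gamma_i}(z)=A_N(z)x_{(n_1+1)\gamma_i}(z).
\end{align*}
The right-hand side, and the term involving $B_N$, contribute only monomials having a factor which is a quasi-particle of color $\gamma_i$ and charge $n_1+1$; so these are among the ``allowed'' monomials in the statement. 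The first I would do, then, is expand the left-hand operator $\big(\frac{d^N}{dz^N}x_{n_2\gamma_i}(z)\big)x_{n_1\gamma_i}(z)$ into its components. Using $x_{n\gamma_i}(z)=\sum_m x_{n\gamma_i}(m)z^{-m-n}$, one checks that
\begin{align*}
\tfrac{d^N}{dz^N}x_{n_2\gamma_i}(z)=\sum_{m_2}(-m_2-n_2)(-m_2-n_2-1)\cdots(-m_2-n_2-N+1)\,x_{n_2\gamma_i}(m_2)z^{-m_2-n_2-N},
\end{align*}
so that the coefficient of a fixed power $z^{-(m_1+m_2)-(n_1+n_2)-N}$ in the product is a linear combination, with polynomial-in-$m_2$ coefficients, of the monomials $x_{n_2\gamma_i}(m_2-j)x_{n_1\gamma_i}(m_1+j)$ over all $j\in\mathbb Z$ with $(m_1+j)+(m_2-j)$ fixed, i.e. over all shifts.

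Next I would set up the linear-algebra core of the argument. Fixing the total degree $m_1+m_2$ and letting $N$ range over $0,\dots,r=2n_2-1$ produces $r+1$ equations, each expressing a polynomial combination of the shifted monomials $\{x_{n_2\gamma_i}(m_2-j)x_{n_1\gamma_i}(m_1+j)\mid j\in\mathbb Z\}$ as something lying in the span of monomials with a charge-$(n_1+1)$ factor of color $\gamma_i$. Restricting attention to the ``unknown'' block $\mathcal D$ (the shifts $j=0,\dots,r$), the coefficient matrix is the $(r+1)\times(r+1)$ matrix whose $(N,j)$ entry is the value at $m_2-j$ of the degree-$N$ falling-factorial polynomial $p_N(x)=\prod_{s=0}^{N-1}(-(x+n_2)-s)$. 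Since $p_0,p_1,\dots,p_r$ form a basis of the space of polynomials of degree $\le r$, this matrix is row-equivalent to the Vandermonde matrix in the $r+1$ distinct nodes $m_2,m_2-1,\dots,m_2-r$, hence invertible. Therefore the $r+1$ monomials of $\mathcal D$ can be solved for in terms of the remaining shifted monomials (those with $j\notin\{0,\dots,r\}$, i.e. from \eqref{all without unknowns}) together with monomials carrying a charge-$(n_1+1)$ factor, which is exactly the claim.

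I expect the main obstacle to be purely bookkeeping rather than conceptual: one must be careful that the $A_N(z),B_N(z)$ produced by Lemma~\ref{lem_relations 1}, when expanded, genuinely contribute \emph{only} monomials with a color-$\gamma_i$ charge-$(n_1+1)$ factor after one regroups the product $x_{(n_1+1)\gamma_i}(z)$ (or its derivative) with the polynomial coefficients of $A_N,B_N$ — the regrouping must respect the convention that monomials are written with non-increasing quasi-particles from the right, so an honest check that reordering does not reintroduce lower-charge factors is needed. A secondary point to handle cleanly is the passage from the generating-function identity to fixed-degree coefficient identities, ensuring that for each fixed total degree $m_1+m_2$ only finitely many shifts $j$ occur with nonzero monomials (which follows from the grading, since $x_{n\gamma_i}(p)v_\Lambda=0$ for $p$ large and raising operators lower degree boundedly), so that the linear system is genuinely finite and the Vandermonde inversion is legitimate. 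Once these two checks are in place, the invertibility of the falling-factorial/Vandermonde matrix finishes the proof immediately.
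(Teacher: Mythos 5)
Your proposal is correct and follows essentially the same route as the paper: both use the $2n_2$ coefficient identities obtained from Lemma~\ref{lem_relations 1} for $N=0,\dots,2n_2-1$ to set up an $(r+1)\times(r+1)$ linear system in the monomials of $\mathcal D$, with the charge-$(n_1+1)$ monomials absorbed into the right-hand side, and then verify that the coefficient matrix is regular. The only inessential difference is in that last verification: the paper shows the matrix of binomial coefficients $\binom{-m_2-n_2+j}{N}$ has determinant $1$ by column reduction using Pascal's rule, whereas you argue invertibility by factoring the falling-factorial matrix through a Vandermonde matrix at the distinct nodes $m_2,m_2-1,\dots,m_2-r$.
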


\begin{proof}
By Lemma~\ref{lem_relations 1} for $N=0,\dots, r$ we have $2n_2$ relations \begin{align*} & \sum_{j_2 \in \mathbb{Z}} {-j_2 - n_2 \choose N } x_{n_2 \gamma_i}(j_2) z^{-j_2
- n_2 - N} \cdot \sum_{j_1 \in \mathbb{Z}} x_{n_1 \gamma_i}(j_1) z^{-j_1 - n_1} \\ & = \sum_{m \in \mathbb{Z}} \,\sum_{j_1 + j_2 = m} {-j_2 - n_2 \choose N } x_{n_2 \gamma_i}(j_2) x_{n_1 \gamma_i}(j_1) z^{-m-n_1 - n_2 - N}\sim 0, \end{align*} where $\sim 0$ means that the left-hand side can be expressed in terms of quasi-particle monomials greater with respect to order $\prec$. So for fixed $m_1, m_2 \in \mathbb{Z}$ we may write down a linear system of $2n_2=r+1$ equations in $r+1$ ``unknown'' quadratic quasi-particle monomials \begin{align*} x_{n_2 \gamma_i}(m_2)x_{n_1 \gamma_1}(m_1), \dots, x_{n_2 \gamma_i}(m_2-r)x_{n_1 \gamma_1}(m_1+r), \end{align*} which should be expressed in terms of higher quasi-particle monomials and other quasi-particle monomials from the set \eqref{all without unknowns}. In order for such an expression to uniquely exist, the described system must be regular, which is checked by showing its matrix (with $p=-m_2-n_2$) \begin{align*} A_{p,r} = \left(\begin{array}{ccccc} {p \choose 0} & {p+1 \choose 0} & {p+2 \choose 0} & \dots & {p+r \choose 0} \\ {p \choose 1} & {p+1 \choose 1} & {p+2 \choose 1} & \dots & {p+r \choose 1} \\ \vdots & \vdots & \vdots & \ddots & \vdots \\ {p \choose r} & {p+1 \choose r} & {p+2 \choose r} & \dots & {p+r \choose r} \\ \end{array} \right) \end{align*} is regular: starting from the second one, we subtract from each column of $\det A_{p,r}$ the previous one, and using ${a+1 \choose b+1} - {a \choose b+1} = {a \choose b}$ we easily get \begin{align*} \det A_{p,r} = \det A_{p, r-1} = \dots = \det A_{p,0} = 1. \end{align*}
\end{proof}

In a similar way we prove the following lemma by using Lemma~\ref{lem_relations 2}:

\begin{lemma}
\label{lem_relations 4} For differently colored quasi-particles with charges $n_1 \gamma_1$ and $n_2 \gamma_2$ such that $n_1 + n_2 \geq k+1$ set $r=n_1+n_2-k-1$. Then for any pair of integers $m_1$ and $m_2$ any of $r+1$ monomials in the set \begin{align*} \mathcal D=\{x_{n_2 \gamma_2}(m_2)x_{n_1 \gamma_1}(m_1), \dots, x_{n_2 \gamma_2}(m_2-r)x_{n_1 \gamma_1}(m_1+r)\} \end{align*} can be expressed in terms of monomials from the set \begin{align*} \{x_{n_2 \gamma_2}(m_2-j)x_{n_1 \gamma_1}(m_1+j)\mid j\in\mathbb Z\}\,\backslash\, \mathcal D. \end{align*}
\end{lemma}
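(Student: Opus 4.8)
The plan is to mimic exactly the argument used for Lemma~\ref{lem_relations 3}, replacing the input Lemma~\ref{lem_relations 1} by Lemma~\ref{lem_relations 2}. First I would expand the $N$-th derivative relation \eqref{different} from Lemma~\ref{lem_relations 2} in powers of $z$: writing $\frac{d^N}{dz^N}x_{n_2\gamma_2}(z)=\sum_{j_2\in\mathbb Z}\binom{-j_2-n_2}{N}N!\,x_{n_2\gamma_2}(j_2)z^{-j_2-n_2-N}$ and multiplying by $x_{n_1\gamma_1}(z)=\sum_{j_1\in\mathbb Z}x_{n_1\gamma_1}(j_1)z^{-j_1-n_1}$, the coefficient of $z^{-m-n_1-n_2-N}$ gives, up to the harmless factor $N!$, the linear relation $\sum_{j_1+j_2=m}\binom{-j_2-n_2}{N}x_{n_2\gamma_2}(j_2)x_{n_1\gamma_1}(j_1)=0$ for every $m\in\mathbb Z$ and every $N=0,\dots,r$ with $r=n_1+n_2-k-1$. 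Note the crucial difference from Lemma~\ref{lem_relations 3}: here the right-hand side is genuinely $0$, not merely $\sim 0$ modulo higher monomials, because \eqref{different} is an exact vanishing coming from \eqref{kplusone}; this is why the conclusion does not need the extra ``charge $n_1+1$'' monomials that appear in Lemma~\ref{lem_relations 3}.

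Next I would fix $m_1,m_2\in\mathbb Z$ and set $m=m_1+m_2$. Among the infinitely many products $x_{n_2\gamma_2}(m_2-j)x_{n_1\gamma_1}(m_1+j)$, $j\in\mathbb Z$, appearing for this fixed $m$, single out the $r+1$ of them with $j=0,1,\dots,r$, i.e. the set $\mathcal D$. The $r+1$ relations indexed by $N=0,\dots,r$ then constitute a linear system whose ``unknowns'' are the monomials in $\mathcal D$ and whose right-hand sides are linear combinations (with integer binomial coefficients) of the remaining monomials $x_{n_2\gamma_2}(m_2-j)x_{n_1\gamma_1}(m_1+j)$ with $j\notin\{0,\dots,r\}$, that is, of monomials from the set displayed in the statement. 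To solve for $\mathcal D$ I must check that the coefficient matrix of this $(r+1)\times(r+1)$ system is invertible. Its entry in row $N$, column $j$ (with $0\le N,j\le r$) is $\binom{-(m_2-j)-n_2}{N}=\binom{p+j}{N}$ where $p=-m_2-n_2$, which is precisely the matrix $A_{p,r}$ already considered in the proof of Lemma~\ref{lem_relations 3}; its transpose, rather, but in any case $\det A_{p,r}=1$ by the column-subtraction computation using $\binom{a+1}{b+1}-\binom{a}{b+1}=\binom{a}{b}$ carried out there.

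Having $\det A_{p,r}=1$, the system has a unique solution expressing each monomial in $\mathcal D$ as an integer-coefficient linear combination of the monomials $x_{n_2\gamma_2}(m_2-j)x_{n_1\gamma_1}(m_1+j)$ with $j\in\mathbb Z\setminus\{0,\dots,r\}$, i.e. precisely of monomials from $\{x_{n_2\gamma_2}(m_2-j)x_{n_1\gamma_1}(m_1+j)\mid j\in\mathbb Z\}\setminus\mathcal D$, which is the assertion. I do not expect any real obstacle: the only point requiring a moment's care is the bookkeeping of which $N$-values are available (Lemma~\ref{lem_relations 2} supplies exactly $N=0,\dots,r$, matching the $r+1$ unknowns), and the observation that the vanishing in \eqref{different} is exact so that no ``$\sim 0$'' correction terms intervene. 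The determinant computation is quotable verbatim from the previous proof, so in writing it up I would simply say ``the same computation as in the proof of Lemma~\ref{lem_relations 3} shows that the $(r+1)\times(r+1)$ coefficient matrix, which is again of the form $A_{p,r}$ with $p=-m_2-n_2$, has determinant $1$.''
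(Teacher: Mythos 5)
Your proposal is correct and is precisely the argument the paper intends: the paper dispatches this lemma with the single remark that it is proved ``in a similar way'' to Lemma~\ref{lem_relations 3} using Lemma~\ref{lem_relations 2}, and you have filled in exactly that argument, including the same coefficient matrix $A_{p,r}$ with $p=-m_2-n_2$ and the observation that the exact vanishing in \eqref{different} removes the need for the higher-charge correction terms.
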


\begin{remark}
\label{R:the way we use relations}
We shall use Lemmas~\ref{lem_relations 3} and \ref{lem_relations 4} in reducing spanning set \eqref{quasi-particle monomial vectors} in two ways:
\begin{itemize}
\item[(1)] In the case when two quasi-particles have the same color $\gamma_i$ and the same charge $n_1=n_2=n$, we express $2n$ quadratic quasi-particle monomials \begin{align*} &x_{n \gamma_i}(j-n)x_{n \gamma_i}(j+n),\dots, x_{n \gamma_i}(j)x_{n \gamma_i}(j),\dots, x_{n \gamma_i}(j+n-1)x_{n \gamma_i}(j-n+1)\ \text{or}\\ &x_{n \gamma_i}(j-n)x_{n \gamma_i}(j-1+n),\dots, x_{n \gamma_i}(j)x_{n \gamma_i}(j-1),\dots, x_{n \gamma_i}(j+n-1)x_{n \gamma_i}(j-n) \end{align*} in terms of bigger quasi-particle monomials and monomials satisfying difference $2n$-condition: \begin{align} \label{E:difference 2n condition for same} x_{n \gamma_i}(m_2)x_{n \gamma_i}(m_1) \quad\text{such that}\quad m_1\leq m_2-2n. \end{align}
\item[(2)] It may happen that quasi-particle monomial vector \eqref{quasi-particle monomial vectors} contains as factors two quasi-particles of the same color $\gamma_i$ and different charges $n_2<n_1$, say \begin{align*}x_{n_2\gamma_i}(m_2)x_{n_1 \gamma_i}(m_1).\end{align*} By initial conditions there is $d_2$ such that \begin{align*} x_{n_2\gamma_i}(m_2)v_\Lambda\neq0 \quad\text{implies}\quad m_2\leq d_2. \end{align*} However, the presence of quasi-particle $x_{n_1 \gamma_i}(m_1)$ allows us to use Lemma~\ref{lem_relations 3} for the sequence \begin{align*}x_{n_2\gamma_i}(d_2)x_{n_1 \gamma_i}(m_1-2n_2),\dots,x_{n_2\gamma_i}(d_2-2n_2+1)x_{n_1 \gamma_i}(m_1-1).\end{align*} Hence, by using order $\prec$, we may assume that our quasi-particle monomial vector \eqref{quasi-particle monomial vectors} contains only factors \begin{align} \label{E:difference 2n condition for different}x_{n_2\gamma_i}(m_2)x_{n_1 \gamma_i}(m_1)\quad\text{such that}\quad m_2\leq d_2-2n_2. \end{align} In a similar way we also use Lemma~\ref{lem_relations 4}.
\end{itemize}
\end{remark}

\subsection{Quasi-particle bases}

We start by a definition of a set $\frak{B}_{W(\Lambda)}$:

\begin{definition}
\begin{align} \label{basis} \frak{B}_{W(\Lambda)} = \hspace{-8mm} & \bigsqcup_{\begin{array}{c}{\scriptstyle 0 \leq n_{1,a} \leq \cdots \leq n_{1,1}\leq k}\\{\scriptstyle 0 \leq
n_{2,b} \leq \cdots \leq n_{2,1}\leq k}\end{array}} \hspace{-3mm} \left\{ x_{n_{2,b} \gamma_2} (m_{2,b}) \cdots x_{n_{2,1}\gamma_2}(m_{2,1}) x_{n_{1,a}\gamma_1}(m_{1,a}) \cdots x_{n_{1,1}\gamma_1}(m_{1,1}) \begin{array}{c}\\ \\ \\ \\ \end{array} \right| \\ \nonumber & \left|\begin{array}{lll} m_{1,j+1} \leq m_{1,j} - 2n_{1,j} \ \ \textrm{if} \ \ n_{1,j+1} = n_{1,j} & \\
m_{2,j+1} \leq m_{2,j} - 2n_{2,j} \ \ \textrm{if} \ \ n_{2,j+1} = n_{2,j} & \\ m_{1,j} \leq d_{\max}(n_{1,j}\gamma_1, \Lambda) - \sum_{j^{\prime} < j} 2n_{1,j} & \\ m_{2,j} \leq d_{\max}(n_{2,j}\gamma_2, \Lambda) - \sum_{j^{\prime} < j } 2n_{2,j} - \sum_{j^{\prime}} \max \{ 0, n_{2,j} + n_{1,j^{\prime}} - k \} & \end{array}\right\},\end{align} $m_{1,1}, \dots, m_{1,a} $, $m_{2,1}, \dots, m_{2,b} \in \mathbb{Z}$ and $d_{\max}(n_{1,j}\gamma_1, \Lambda)$, $d_{\max}(n_{2,j}\gamma_2, \Lambda)$ as in \eqref{dmaxmore}.
\end{definition}

\begin{theorem}
\label{theorem} The set $\{ b \cdot v_{\Lambda} \;|\; b \in \frak{B}_{W(\Lambda)} \}$ is a basis for $W(\Lambda)$.
\end{theorem}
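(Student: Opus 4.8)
The plan is to prove Theorem~\ref{theorem} in the two standard halves: first that $\{b\cdot v_\Lambda \mid b\in\frak{B}_{W(\Lambda)}\}$ spans $W(\Lambda)$, and then that it is linearly independent. For the spanning part I would start from the PBW-type spanning set of particle monomials \eqref{span}, group the particles of each color into quasi-particles, and reduce using the linear order $\prec$ on quasi-particle monomials together with Lemmas~\ref{lem_relations 3} and \ref{lem_relations 4} as organized in Remark~\ref{R:the way we use relations}. Concretely: given any quasi-particle monomial vector \eqref{quasi-particle monomial vectors}, if two adjacent same-color quasi-particles violate the difference condition $m_{1,j+1}\le m_{1,j}-2n_{1,j}$ (or fail to have non-decreasing charges read from the right), apply the relevant relation to rewrite it as a combination of $\prec$-greater monomials and monomials with a strictly higher charge quasi-particle of that color; the latter have been bounded above in total charge by $k$ (VOA relation \eqref{kplusone}), so this sub-induction on the charge-type terminates. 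Iterating and invoking the fact that $\prec$ is compatible with multiplication, the process stabilizes on monomials satisfying the difference conditions; the initial conditions then come from \eqref{dmaxmore} (and the mixed-color correction term from Lemma~\ref{lem_relations 4}), since any monomial violating an initial condition annihilates $v_\Lambda$ after we have pushed the offending quasi-particle as far right as possible. This yields that $\frak{B}_{W(\Lambda)}$ spans.

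For linear independence I would follow Georgiev's strategy adapted to Feigin-Stoyanovsky's type subspaces. Suppose a finite linear combination $\sum c_b\, b\cdot v_\Lambda = 0$ with $b$ ranging over $\frak{B}_{W(\Lambda)}$; order the $b$'s by $\prec$ and argue by induction that the coefficient of the $\prec$-minimal monomial must vanish. The key device is the projection $\pi$ onto a suitable $\mathfrak h$-weight subspace of the tensor product $L(\Lambda_2)^{\otimes k_2}\otimes\cdots\otimes L(\Lambda_0)^{\otimes k_0}$ into which $L(\Lambda)$ embeds; $\pi$ is chosen so that it distributes the constituent charge-one particles of each quasi-particle among the tensor factors in exactly the way dictated by the color-dual-charge type, using \eqref{init}--\eqref{dmaxone} to control which particles can act nontrivially on which $v_{\Lambda_j}$. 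On the $\prec$-minimal monomial, $\pi$ acts injectively (it just separates the quasi-particles factor by factor), and on each level-one factor one reduces to the known linear independence of monomial bases there—equivalently one uses coefficients of the intertwining operators from the Frenkel-Kac construction \eqref{vertex}, together with the commutation formula \eqref{commutator00} and its consequence $x_\alpha(m)e^\lambda = \epsilon(\alpha,\lambda)e^\lambda x_\alpha(m+\langle\alpha,\lambda\rangle)$, to shift degrees and peel off the leading quasi-particle. The combinatorial bookkeeping is arranged so that applying the appropriate operator (a coefficient of an intertwining operator, or multiplication by $e^{-\gamma_i}$, or a degree-raising operator from the VOA) sends the $\prec$-minimal monomial to one whose coefficient we already know, while sending all $\prec$-greater monomials either to zero or to monomials still within $\frak{B}_{W(\Lambda)}$ of strictly smaller "complexity," so the induction closes.

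The main obstacle, and the step I would spend the most care on, is constructing the projection $\pi$ and verifying its compatibility with the defining conditions of $\frak{B}_{W(\Lambda)}$: one must check that after distributing the quasi-particles across the $k$ tensor factors according to the color-dual-charge type, the degree bounds on each factor coming from \eqref{dmaxone} add up exactly to the global bounds $d_{\max}(n\gamma_i,\Lambda)$ in \eqref{dmaxmore}, \emph{including} the mixed-color correction $\sum_{j'}\max\{0,n_{2,j}+n_{1,j'}-k\}$, which is precisely where the restriction to $\Lambda=k_0\Lambda_0+k_1\Lambda_1$ or $k_1\Lambda_1+k_2\Lambda_2$ enters (for a general $\Lambda$ the colors $\gamma_1$ and $\gamma_2$ would both be "active" on too many fundamental factors and the bookkeeping fails—this is exactly the phenomenon the paper advertises in the introduction). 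A secondary technical point is ensuring that the degree-shifting operators used in the linear-independence induction map $\frak{B}_{W(\Lambda)}$ into itself (or into a controlled superset), which again reduces to the additivity of the $d_{\max}$ bounds under the tensor decomposition. Once $\pi$ is in place with these properties, both halves of the argument are essentially a careful but routine induction, and the equality of the resulting character with the formula of \cite{J3} will follow by directly summing the generating function over $\frak{B}_{W(\Lambda)}$, which is done in the subsequent Theorem~\ref{formulas}.
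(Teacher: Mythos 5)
Your proposal follows essentially the same route as the paper: spanning by reducing the PBW-type quasi-particle spanning set via Lemmas~\ref{lem_relations 3} and \ref{lem_relations 4} with induction on the order $\prec$, and linear independence via Georgiev's projection $\pi$ together with coefficients of intertwining operators and the $e^{\lambda_i}$ commutation relations, peeling off the leading quasi-particle while checking the resulting monomial stays in $\frak{B}_{W(\Lambda)}$. You also correctly identify the key technical point (compatibility of $\pi$ with the defining conditions of $\frak{B}_{W(\Lambda)}$, and why the argument is confined to $\Lambda=k_0\Lambda_0+k_1\Lambda_1$ or $k_1\Lambda_1+k_2\Lambda_2$), the paper's only additional wrinkle being that $\gamma_1$-quasi-particles are spread over tensor factors from the right and $\gamma_2$-quasi-particles from the left.
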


\begin{proof} First we prove that the spanning set for $W(\Lambda)$ consisting of monomial vectors \begin{align}\label{E:PBW spanning monomial vectors} bv_\Lambda= x_{n_{2,b} \gamma_2} (m_{2,b})
\cdots x_{n_{2,1}\gamma_2}(m_{2,1}) x_{n_{1,a}\gamma_1}(m_{1,a}) \cdots x_{n_{1,1}\gamma_1}(m_{1,1})v_\Lambda \end{align} can be reduced to a spanning set $\{ b \cdot v_{\Lambda} \;|\; b \in \frak{B}_{W(\Lambda)} \}$. Our argument follows closely \cite{G} by using induction on order $\prec$. The condition \begin{align*} m_{i,j+1} \leq m_{i,j} - 2n_{i,j} \ \ \textrm{if} \ \ n_{i,j+1} = n_{i,j} \end{align*} for $b \in \frak{B}_{W(\Lambda)}$ is simply the difference $2n$-condition \eqref{E:difference 2n condition for same}, and if monomial vector does not satisfy it, we may express it as linear combination of bigger monomial vectors and, by induction, we may omit it from our spanning set.

As we have seen, monomial vector \eqref{E:PBW spanning monomial vectors} is zero if the initial condition \begin{align*} m_{i,j} \leq d_{\max}(n_{i,j}\gamma_i, \Lambda) \end{align*} is not satisfied. If \begin{align*} bv_\Lambda=\dots x_{n_{1,2}\gamma_1}(m_{1,2})x_{n_{1,1} \gamma_1}(m_{1,1})v_\Lambda, \end{align*} then, by \eqref{E:difference 2n condition for different} in the second part of Remark~\ref{R:the way we use relations}, we may omit $bv_\Lambda$ from the spanning set if the condition \begin{align*} m_{1,2}\leq d_{\max}(n_{1,2}\gamma_1, \Lambda)-2n_{1,2} \end{align*} is not satisfied. By using the induction on $\prec$ we see that we may omit $bv_\Lambda$ from the spanning set if the condition \begin{align*} m_{1,j} \leq d_{\max}(n_{1,j}\gamma_1, \Lambda)-\sum_{j^{\prime} =1}\sp j 2n_{1,j} \end{align*} is not satisfied. The term $ - \sum_{j^{\prime}} \max \{ 0, n_{2,j} + n_{1,j^{\prime}} - k \}$ in the condition \begin{align*} m_{2,j} \leq d_{\max}(n_{2,j}\gamma_2, \Lambda) - \sum_{j^{\prime} < j } 2n_{2,j} - \sum_{j^{\prime}} \max \{ 0, n_{2,j} + n_{1,j^{\prime}} - k \} \end{align*} appears as a consequence of Lemma~\ref{lem_relations 4}.

What remains to prove is linear independence of the spanning set $\frak{B}_{W(\Lambda)}v_\Lambda$.
\end{proof}

\section{A proof of linear independence}

\subsection{Intertwining operators}

Here we introduce operators which we use in our proof of linear independence of quasi-particle bases. First, introduce operators \begin{align*} \mathcal{Y}(1\otimes e^{\mu},z)
= Y(1\otimes e^{\mu},z)e^{i \pi \Lambda_i}c(\cdot, \Lambda_i), \ \ \mu \in \Lambda_i + Q, \ i=1, \dots, \ell, \end{align*} where $c$ is bi-multiplicative, alternating commutator map corresponding to $\epsilon$. Operators $\mathcal{Y}(\cdot, z)$ are level one Dong-Lepowsky intertwining operators \cite{DL}. Note that for $\mu \in Q$ we have $\mathcal{Y}(1\otimes e^{\mu},z) = Y(1\otimes e^{\mu},z)$.
Let $\lambda_i = \omega_i - \omega_{i-1}$ for $i=1, \dots, \ell$. By using Jacobi identity for operators $Y(1\otimes e^{\gamma}, z)$ for $\gamma \in \Gamma$ and $\mathcal{Y}(1 \otimes e^{\lambda_i}, z)$ (cf. formula 12.8 of \cite{DL}), we see that operators $\mathcal{Y}(1 \otimes e^{\lambda_i}, z)$ commute with the action of $\tilde{\mathfrak{g}}_1$: \begin{align} \label{commutator} [Y(1\otimes e^{\gamma}, z_1),\mathcal{Y}(1 \otimes e^{\lambda_i}, z_2)]=0. \end{align} We define the following coefficients of intertwining operators (cf. \cite{P2}): \begin{align} \label{coeff_intertwining} [i]=\text{Res}\, z^{-1-\langle \lambda_i,\omega_{i-1}\rangle} {\mathcal Y}(1\otimes e^{\lambda_i},z), \quad i=1, \dots, \ell. \end{align}

\subsection{Operators $e\sp{\lambda_i}$}

Since $\langle \lambda_i, \gamma_j\rangle=\delta_{ij}$ for $i,j=1,2$, commutation formula \eqref{commutator00} implies \begin{align}\label{commutator11} x_{\gamma_j}(z)e\sp{\lambda_i}= \epsilon(\gamma_j,\lambda_i)\, z\sp{\delta_{ij}}e\sp{\lambda_i} x_{\gamma_j}(z). \end{align} By comparing coefficients we have \begin{align}\label{commutator3} x_{\gamma_j}(m)e^{\lambda_i}& =
\epsilon(\gamma_j,\lambda_i)e^{\lambda_i} x_{\gamma_j} (m + \delta_{ij}). \end{align} In particular, for $i\neq j$ operators $x_{\gamma_j}(m)$ and $e^{\lambda_i}$ commute up to a scalar $\epsilon(\gamma_j,\lambda_i)$.

\subsection{Georgiev's projection}

For fundamental module $L(\Lambda_j)$, $j\in\{0,1,2\}$, we denote the $\mathfrak h$-weight subspaces as \begin{align*} L(\Lambda_j)_{s,r}=L(\Lambda_j)_{s\gamma_2+r\gamma_1}. \end{align*} As in \cite{G}, a tensor product of $k$ fundamental modules $L(\Lambda_{j_1}),\dots,L(\Lambda_{j_k})$ we shall write as \begin{align*} L(\Lambda_{j_k})\otimes\dots\otimes L(\Lambda_{j_1}). \end{align*} For $\Lambda=k_0\Lambda_0+k_1\Lambda_1$ we consider \begin{align*} L(\Lambda)\subset L(\Lambda_1)\sp{\otimes k_1}\otimes L(\Lambda_0)\sp{\otimes k_0}, \end{align*} and for $\Lambda=k_1\Lambda_1+k_2\Lambda_2$ we consider \begin{align*} L(\Lambda)\subset L(\Lambda_1)\sp{\otimes k_1}\otimes L(\Lambda_2)\sp{\otimes k_2}. \end{align*} For such $\Lambda$'s and a given color-dual-charge type \begin{align*} (r_2\sp{(1)}, \dots, r_2\sp{(k)}; r_1\sp{(1)}, \dots, r_1\sp{(k)}) \end{align*} we define a projection $\pi=\pi_{(r_2\sp{(1)},\dots, r_1\sp{(k)})}$ from the tensor product of $k$ fundamental modules to the subspace \begin{align*} L(\Lambda_{j_k})_{r_2\sp{(1)},r_1\sp{(k)}}\otimes\dots\otimes L(\Lambda_{j_1})_{r_2\sp{(k)},r_1\sp{(1)}}. \end{align*} Since $x_\gamma(z)\sp2=0$ on every fundamental module, in the projection of a formal generating function of quasi-particle monomial vectors \begin{align*} \pi\left(x_{n_{2,b} \gamma_2} (z_{2,b}) \cdots x_{n_{2,1}\gamma_2}(z_{2,1}) x_{n_{1,a}\gamma_1}(z_{1,a}) \cdots x_{n_{1,1}\gamma_1}(z_{1,1})v_\Lambda\right) \end{align*} of the given color-dual-charge type $(r_2\sp{(1)}, \dots, r_2\sp{(k)}; r_1\sp{(1)}, \dots, r_1\sp{(k)})$ vertex operator $x_{n_{1,1}\gamma_1}(z_{1,1})=x_{\gamma_1}(z_{1,1})\sp{n_{1,1}}$ spreads over $n_{1,1}$ rightmost tensor factors: \begin{align*} 1\otimes\dots\otimes 1\otimes x_{\gamma_1}(z_{1,1})\otimes\dots\otimes x_{\gamma_1}(z_{1,1}). \end{align*} Similarly, vertex operator $x_{n_{1,2}\gamma_1}(z_{1,2})=x_{\gamma_1}(z_{1,2})\sp{n_{1,2}}$ spreads over $n_{1,2}\leq n_{1,1}$ rightmost tensor factors: \begin{align*} 1\otimes\dots\otimes 1\otimes x_{\gamma_1}(z_{1,2})\otimes\dots\otimes x_{\gamma_1}(z_{1,2}), \end{align*} and so on. Therefore, on the rightmost tensor factor we have all color $\gamma_1$ operators \begin{align*} \dots \otimes x_{\gamma_1}(z_{1,a}) \dots x_{\gamma_1}(z_{1,1})v_{\Lambda_{j_1}}, \end{align*} on the second right color $\gamma_1$ operators with charge at least two, and so on.

On the other hand, in the projection of a formal generating function of quasi-particle monomial vectors vertex operator $x_{n_{2,1}\gamma_2}(z_{2,1})$ spreads over $n_{2,1}$ leftmost tensor factors: \begin{align*} x_{\gamma_2}(z_{2,1})\otimes\dots\otimes x_{\gamma_2}(z_{2,1})\otimes1\otimes\dots\otimes 1,\end{align*} and so on. Therefore, on the leftmost tensor factor we have all color $\gamma_2$ operators \begin{align*} x_{\gamma_2}(z_{2,b}) \dots x_{\gamma_2}(z_{2,1})v_{\Lambda_{j_k}}\otimes \dots, \end{align*} on the second left color $\gamma_2$ operators with charge at least two, and so on.

\subsection{A proof of linear independence}

For $b \in \frak{B}_{W(\Lambda)}$\,, \begin{align*} b = x_{n_{2,b} \gamma_2} (m_{2,b}) \cdots x_{n_{2,1}\gamma_2}(m_{2,1}) x_{n_{1,a}\gamma_1}(m_{1,a}) \cdots x_{n_{1,1}\gamma_1}(m_{1,1}),
\end{align*} presume that $b \cdot v_{\Lambda}=0$, and consequently \begin{align} \label{linind} \pi \cdot b \cdot v_{\Lambda} = 0. \end{align} Act on \eqref{linind} with $[1]_{n_{1,1}} = 1^{\otimes (k-n_{1,1})} \otimes [1] \otimes 1^{\otimes (n_{1,1} - 1)}$ and use the fact \eqref{commutator} that $[1]$ commutes with all $x_{\gamma}(m)$ to shift $[1]$ all the way along the part of $b$ projected on $n_{1,1}-$th coordinate (from right) towards the corresponding highest weight vector. Definition \eqref{coeff_intertwining} implies \begin{align*} [1]v_{\Lambda_j} = Ce^{\lambda_1} v_{\Lambda_j}
\end{align*} for some $C\in\mathbb C\sp\times$. Now shift $e^{\lambda_1}$ all the way to the left using commutation relation \eqref{commutator11} \begin{align*} x_{\gamma_1}(z)e\sp{\lambda_1}= \epsilon(\gamma_1,\lambda_1)\, z\,e\sp{\lambda_i} x_{\gamma_j}(z). \end{align*} After dropping out the invertible operator $e^{\lambda_1}$, we finally get the altered expression with degrees of all
quasi-particles of charge $n_{1,1}$ and color $\gamma_1$ increased by one. Note that this calculation does not change degrees of quasi-particles of color $\gamma_1$ and smaller charge, because none
of its particle constituents gets projected to the coordinate of interest, or any of the quasi-particles of color $\gamma_2$, because commuting with $e^{\lambda_1}$ does not change the degrees of
particles of color $\gamma_2$.

By repeating this procedure the original degree $m_{1,1}$ will be shifted to its maximal possible value: \begin{align} \label{linind2} \pi \cdot b^{\prime} \cdot x_{n_{1,1}\gamma_1} (d_{\max}(n_{1,1}\gamma_1, \Lambda)) v_{\Lambda} = 0 \end{align} with $b^{\prime} = x_{n_{2,b} \gamma_2} (m^{\prime}_{2,b}) \cdots x_{n_{2,1}\gamma_2}(m^{\prime}_{2,1})\cdot x_{n_{1,a}\gamma_1}(m^{\prime}_{1,a}) \cdots x_{n_{1,2}\gamma_1}(m^{\prime}_{1,2})$ of same color-charge type as $b$, except for not having $n_{1,1}$. Let $n_{1,j}= n_{1,1}$ for all
$j=2, \dots, j_1$, for some $j_1 = 1,\dots, a$, and note that $j_1=1$ means that all of quasi-particles of $b^{\prime}$ of color $\gamma_1$ have charge strictly smaller than $n_{1,1}$. We may write
down the degrees of quasi-particles in $b^{\prime}$ in terms of the degrees of corresponding quasi-particles in $b$: \begin{align*} & m^{\prime}_{1,j} = \left\{ \begin{array}{l} m_{1,j} +
(d_{\max}(n_{1,1}\gamma_1, \Lambda) - m_{1,1}), \ j = 2, \dots, j_1, \\ m_{1,j}, \ j=j_1+1, \dots, a, \end{array}\right. \\ & m^{\prime}_{2,j} = m_{2,j}, \ j = 1, \dots, b.\end{align*} The fact
that $b^{\prime}$ belongs to $\frak{B}_{W(\Lambda)}$ is easily proved by checking that the degrees of its constituting quasi-particles satisfy the defining inequalities of \eqref{basis}.

Now using formulas \begin{align*} x_{\gamma}(d_{\max}(\gamma, \Lambda)) v_{\Lambda} & = C e^{\gamma} v_{\Lambda}, \ C \in \mathbb{C}^{\times}, \gamma \in \Gamma, \Lambda = \Lambda_0, \Lambda_1, \Lambda_2 \\x_{\beta_i}(m) e^{\beta_j} &= C e^{\beta_j} x_{\beta_i}(m+\langle \beta_i, \beta_j \rangle), \ C \in \mathbb{C}^{\times}, \beta_i, \beta_j \in \Gamma, \end{align*} coming out from \eqref{gaction}, \eqref{linind2} leads to \begin{align} \label{linind3} & \pi \cdot b^{\prime} \cdot x_{n_{1,1}} (d_{\max}(n_{1,1}\gamma_1, \Lambda)) v_{\Lambda} = \\ \nonumber & = C \cdot \pi \cdot b^{\prime} \cdot (1)_{n_{1,1}} v_{\Lambda} = C \cdot (1)_{n_{1,1}} \pi \cdot b^{\prime \prime} \cdot v_{\Lambda} = 0 \end{align} for some $C \in \mathbb{C}^{\times}$, with $(1)_{n_{1,1}} = 1^{\otimes(k-n_{1,1})} \otimes (e^{\gamma_1})^{\otimes n_{1,1}}$ and \begin{align*} b^{\prime \prime} = x_{n_{2,b} \gamma_2} (m^{\prime \prime}_{2,b}) \cdots x_{n_{2,1}\gamma_2}(m^{\prime \prime}_{2,1})\cdot x_{n_{1,a}\gamma_1} (m^{\prime \prime}_{1,a}) \cdots x_{n_{1,2}\gamma_1}(m^{\prime \prime}_{1,2}) \end{align*} such that \begin{align} \label{bprimeprime} & m^{\prime \prime}_{1,j} = \left\{ \begin{array}{l} m^{\prime}_{1,j} + 2n_{1,j} = m_{1,j} + (d_{\max}(n_{1,1}\gamma_1, \Lambda) - m_{1,1}) + 2n_{1,j}, \ j = 2, \dots, j_1 \\ m^{\prime}_{1,j} + 2n_{1,j} = m_{1,j} + 2n_{1,j}, \ j=j_1+1, \dots, a \end{array}\right. \\ & m^{\prime \prime}_{2,j} = \left\{ \begin{array}{l} m^{\prime}_{2,j} + n_{2,j} + n_{1,1} - k = m_{2,j}+ n_{2,j} + n_{1,1} - k, \ j = 1, \dots, j_2 \\ \nonumber m^{\prime}_{2,j} = m_{2,j}, \ j=j_2+1, \dots, b \end{array}\right., \end{align} where $j_2 = 0, \dots, b$ is such that $n_{2,j} + n_{1,1} \geq k$ for all $j=1, \dots, j_2$, with $j_2 = 0$ meaning that this inequality does not hold for any quasi-particles of color $\gamma_2$.

By checking defining inequalities of \eqref{basis} we show that $b^{\prime \prime} \in \frak{B}_{W(\Lambda)}$: first two inequalities are obvious since the described calculation treats all quasi-particles of same color and charge in the same way, hence the relative differences in degrees between neighboring quasi-particles of same charge and color are kept the same. Furthermore, for $m^{\prime \prime}_{1,j}$, $j=1, \dots, j_1$, the third inequality after introducing \eqref{bprimeprime} and taking into account $n_{1,j} =n_{1,j-1}= \dots = n_{1,1}$ becomes $m_{1,j} \leq m_{1,1} - \sum_{j^{\prime} < j} 2n_{j^{\prime},1}$, i.e. exactly the second inequality for $b$ applied successively $j-1$ times. For $j=j_1 + 1, \dots, a$, the third inequality leads directly to third inequality for $b$. The fourth inequality is checked directly and simultaneously for all $j=1, \dots, b$, let $m^{\prime \prime}_{j,2}$ be written uniquely as $m^{\prime \prime}_{2,j} = m_{2,j} + \max \{0, n_{2,j} + n_{1,1} - k \}$. This proves that $b^{\prime \prime}$ is again an element of $\frak{B}_{W(\Lambda)}$.

We now proceed with \eqref{linind3}, which after dropping out $(1)_{n_{1,1}}$ becomes \begin{align}\label{linind4} \pi \cdot b^{\prime \prime} \cdot v_{\Lambda} = 0. \end{align} If we now apply above described procedure to \eqref{linind4}, always proving that quasi-particle monomial appearing during various steps is again in $\frak{B}_{W(\Lambda)}$, after finitely many steps we arrive to the obviously false conclusion that $v_{\Lambda}=0$, which via contradiction proves $b \cdot v_{\Lambda} \neq 0$ for all $b \in \frak{B}_{W(\Lambda)}$.

If we now start with a general linear combination \begin{align} \label{lincomb} \sum_{b} C_b\cdot b \cdot v_{\Lambda}=0 \end{align} and apply the above arguments to the smallest element $b_{\min}$ of
\eqref{lincomb}, then all the other monomials of \eqref{lincomb} get annihilated at some step of the procedure described. We are therefore led to $C_{b_{\min}}\cdot b_{\min} \cdot v_{\Lambda}=0$,
which then because of $b_{\min} \cdot v_{\Lambda} \neq 0$ implies $C_{b_{\min}} = 0$. Repeating this procedure after finitely many steps proves that all coefficients of \eqref{lincomb} are zero, thus
showing the linear independence of $\frak{B}_{W(\Lambda)}$.

\begin{remark}
\label{further_directions} As we have already said, our construction is parallel to Georgiev's construction in \cite{G} of quasi-particle bases of principal subspaces of standard modules for $\mathfrak{sl}(\ell + 1, \mathbb{C})^{\widetilde{}}$. However, a major difference between the two is a nature of relations \eqref{different}. Roughly speaking, in our case different quasi-particles of small charge do not interact. For this reason we have to choose Georgiev's projection in such a way that generating functions $x_{n\gamma_1}(z)$ for quasi-particles of color $\gamma_1$ spread over $n$ tensor factors from the {\bf right}, and generating functions $x_{n\gamma_2}(z)$ for quasi-particles of color $\gamma_2$ spread over $n$ tensor factors from the {\bf left}. Such a trick is not possible in higher ranks, as easily seen on example of level $2$ for $\mathfrak{sl}(4, \mathbb{C})^{\widetilde{}}$: we have generating functions $x_{n\gamma_i}(z)$ for quasi-particles, $i=1,2,3$ and $n=1,2$. Charge one quasi-particles $x_{\gamma_i}(z)$ and $x_{\gamma_j}(z)$ do not interact for $i\neq j$, so for ``our proof'' we would need a Georgiev's projection which could place each charge one quasi-particle (altogether 3) on separate tensor factor (altogether 2).Of course, the fact that Georgiev's proof ``does not work'' for $\mathfrak{sl}(4, \mathbb{C})^{\widetilde{}}$ level $2$ does not necessarily mean that there is no fermionic formula of a corresponding form. \end{remark}

\section{Character formulas for $W(\Lambda)$}

\subsection{Character formulas}

In this section we define formal character $\chi(W(\Lambda))$ for $W(\Lambda)$ and use the quasi-particle basis $\frak{B}_{W(\Lambda)}$ given by \eqref{basis} to directly calculate the fermionic-type character formula.

\begin{definition}
For given quasi-particle monomial \begin{align} \label{monomial} b = x_{n_{2,b} \gamma_2} (m_{2,b}) \cdots x_{n_{2,1}\gamma_2}(m_{2,1}) x_{n_{1,a}\gamma_1}(m_{1,a}) \cdots x_{n_{1,1}\gamma_1}(m_{1,1}) \end{align} in $\frak{B}_{W(\Lambda)}$ define charge $c(b)$ and degree $d(b)$ by \begin{align*} c(b)= \gamma_1 \cdot \sum_{j=1}^a n_{1,j} + \gamma_2 \cdot \sum_{j=1}^b n_{2,j}, \ \ d(b) = - \sum_{j=1}^a m_{1,j} - \sum_{j=1}^b m_{2,j}. \end{align*} Note that $d(b)$ is defined with minus signs in order to be positive.

Define now the formal character $\chi(W(\Lambda))$ for $W(\Lambda)$ by \begin{align} \label{character} \chi(W(\Lambda))(z_1, z_2; q) = \sum_{n_1, n_2 \geq 0} A_{\Lambda}^{n_1,n_2}(q) z_1^{n_1} z_2^{n_2}, \end{align} where $A_{\Lambda}^{n_1,n_2}(q)$ encodes the number of basis elements in component of $W(\Lambda)$ spanned by quasi-particle of charge $n_1\gamma_1 + n_2 \gamma_2$. \end{definition}

For fixed $n_1, n_2 \geq 0$ and non-negative integers $M_{i,j}$ such that $\sum_{j=1}^k M_{i,j}= n_i$, $i=1,2$, $j=1, \dots, k$, we look at all $b$ in $\frak{B}_{W(\Lambda)}$ given by \eqref{monomial} of charge $c(b)= n_1\gamma_1 + n_2 \gamma_2$ and having $M_{i,j}$ quasi-particle constituents with charge $j$ and color $\gamma_i$. From the way $\frak{B}_{W(\Lambda)}$ is given in \eqref{basis} we
have \begin{align} \label{dimensions} A_{\Lambda}^{n_1,n_2}(q) = \sum_{\genfrac{}{}{0pt}{}{ \sum_{j=1}^k jM_{1,j}= n_1}{\sum_{j=1}^k jM_{2,j}= n_2 }} \frac{q^{d(b_{M_{i,j}})}}{\prod_{i=1}^2 \prod_{j=1}^k (q)_{M_{i,j}}}, \end{align} where $b_{M_{i,j}}$ is the so-called minimal monomial in $\frak{B}_{W(\Lambda)}$, characterized by having smallest possible degree of all basis elements given the above constraints. Therefore, thanking the fact that character merely reflects the way $\frak{B}_{W(\Lambda)}$ is presented in \eqref{basis}, the task of finding the character formula reduces to
relatively simple calculation of the degree of minimal monomial.

We will write \begin{align*} b_{M_{i,j}} = \prod_{j=1}^k \prod_{s_{2,j} = N_{2,k-j}+1}^{N_{2,k-j+1}} x_{j\gamma_2}(m_{2,s_{2,j}}^{\max}) \cdot \prod_{j=1}^k \prod_{s_{1,j} = N_{1,j+1}+1}^{N_{1,j}} x_{j\gamma_1}(m_{1,s_{1,j}}^{\max}), \end{align*} where for $j=1, \dots, k$ the following partial sums are given: \begin{align} \label{connection} N_{1,j} & = M_{1,j} + \dots + M_{1,k}, \ \ N_{1,k+1} = 0 \\ \nonumber N_{2,j} & = M_{2,k-j+1} + \dots + M_{2,k}, \ \ N_{2,0}=0. \end{align}

Now, using first and third inequality of \eqref{basis} we calculate for fixed $j=1, \dots, k$: \begin{align*} m_{1,N_{1,j+1}+1}^{\max} & = d_{\max}(j\gamma_1, \Lambda) - \sum {2j} = d_{\max}(j\gamma_1, \Lambda) - 2j(M_{1,j+1} + \dots + M_{1,k}) \\ m_{1,s_{1,j}}^{\max} & = m_{1,s_{1,j} -1}^{\max} - 2j = \dots = m_{1,N_{1,j+1}+1}^{\max} - 2j(s_{1,j}-N_{1,j+1}-1) \end{align*} for $s_{1,j} > N_{1,j+1}+1$, and therefore \begin{align} \label{gamma1} & d(\prod_{s_{1,j} = N_{1,j+1}+1}^{N_{1,j}} x_{j\gamma_1}(m_{1,s_{1,j}}^{\max})) = - \sum_{s_{1,j} = N_{1,j+1}+1}^{N_{1,j}} m_{1,s_{1,j}}^{\max} = \\ \nonumber &= 2j(1+ \dots + M_{1,j} -1) - M_{1,j}(d_{\max}(j\gamma_1, \Lambda) - 2j(M_{1,j+1} + \dots + M_{1,k})) = \\ \nonumber & = jM_{1,j}^2 + 2j\cdot M_{1,j} \cdot (M_{1,j+1}+ \dots + M_{1,k}) + \max \{ 0,j-k_0 \} \cdot M_{1,j}. \end{align}

Similarly, second and fourth inequality of \eqref{basis} give \begin{align*} m_{2,N_{2,k-j}+1}^{\max} & = d_{\max}(j\gamma_2, \Lambda) - 2j(M_{2,j+1} + \dots + M_{2,k})- \\ & - (\max \{0, j+1-k \}\cdot M_{1,1} + \dots + \max \{ 0, j\} \cdot M_{1,k}) = \\ & = d_{\max}(j\gamma_2, \Lambda) - 2j(M_{2,j+1} + \dots + M_{2,k})- \\ &- (M_{1,k-j+1} + \dots + jM_{1,k}) \\ m_{2,s_{2,j}}^{\max} & = m_{2,s_{2,j} -1}^{\max} - 2j = \dots = m_{2,N_{2,k-j+1}+1}^{\max} - 2j(s_{2,j}-N_{2,k-j}) \end{align*} for $s_{2,j} > N_{2,k-j}+1$, so \begin{align} & \label{gamma2} d(\prod_{s_{2,j} = N_{2,k-j}+1}^{N_{2,k-j+1}} x_{j\gamma_2}(m_{2,s_{2,j}}^{\max})) = - \sum_{s_{2,j} = N_{2,k-j}+1}^{N_{2,k-j+1}} m_{2,s_{2,j}}^{\max} = \\ \nonumber &= 2j(1+ \dots + M_{2,j} -1) - M_{2,j}(d_{\max}(j\gamma_2, \Lambda) - 2j(M_{2,j+1} + \dots + M_{2,k})) - \\ \nonumber & - (M_{1,k-j+1}+ \dots + jM_{1,k}) = \\ \nonumber & = jM_{2,j}^2 + M_{2,j} \cdot (2jM_{2,j+1}+ \dots + 2jM_{2,k} + M_{1,k-j+1} + \dots + jM_{1,k}) + \\ \nonumber & + \max \{ 0,j-(k_0+k_1) \} \cdot M_{2,j}. \end{align}

Introduce now $2k \times 2k$ matrix $A$ defined by \begin{align} \label{matrix} Q^{(k)} = \left(\begin{array}{c|c} A^{(k)} & B^{(k)} \\ \hline 0 & A^{(k)} \end{array} \right), \end{align}
where $A^{(k)}= (\min \{ i,j \})_{i, j = 1}^{k}$, $B^{(k)} = (\max \{ 0, i+j-k \})_{i, j = 1}^{k}$, and \begin{align} \label{vector} \mathbf{L}^{(k)}_{\Lambda}:= (\underbrace{0, \dots, 0}_{k_0}, 1, 2, \dots , k-k_0, \underbrace{0, \dots , 0}_{k_0 + k_1},1, \dots, k_2). \end{align} Then for $\mathbf{M}:={}^{t}(M_{1,1}, \dots , M_{1,k}, M_{2,1}, \dots , M_{2,k})$ and using \eqref{gamma1} and \eqref{gamma2} it easy to check that we have \begin{align*} d(b_{M_{i,j}}) = {}^{t}\mathbf{M}\cdot Q^{(k)} \cdot \mathbf{M} + \mathbf{L}^{(k)}_{\Lambda}\cdot \mathbf{M}, \end{align*} so from \eqref{character} and \eqref{dimensions} we now have the following result:

\begin{theorem}
\label{formulas} For $\Lambda = k_0 \Lambda_0 + k_1 \Lambda_1$ or $\Lambda = k_1 \Lambda_1 + k_2 \Lambda_2$ being the highest weight of standard $\mathfrak{sl}(3,\mathbb{C})^{\widetilde{}}$-module
$L(\Lambda)$, the following fermionic-type formula holds for the formal character $\chi(W(\Lambda))$ of Feigin-Stoyanovsky's type subspace $W(\Lambda)$ of $L(\Lambda)$: \begin{align} \label{formula} \chi(W(\Lambda))(z_1, z_2; q) = \sum_{n_1, n_2 \geq 0} \sum_{\genfrac{}{}{0pt}{}{ \sum_{j=1}^k jM_{1,j} = n_1}{\sum_{j=1}^k jM_{2,j}= n_2 }} \frac{q^{{}^{t}\mathbf{M}\cdot Q^{(k)} \cdot \mathbf{M} + \mathbf{L}^{(k)}_{\Lambda}\cdot \mathbf{M}}} {\prod_{i=1}^2 \prod_{j=1}^k (q)_{M_{i,j}}} z_1^{n_1} z_2^{n_2}, \end{align} with $Q^{(k)}$ given by \eqref{matrix}, $\mathbf{L}^{(k)}_{\Lambda}$ by \eqref{vector}, and $\mathbf{M}={}^{t}(M_{1,1}, \dots , M_{1,k}, M_{2,1}, \dots , M_{2,k})$.
\end{theorem}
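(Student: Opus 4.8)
The plan is to derive the character formula \eqref{formula} as a purely bookkeeping consequence of the basis description \eqref{basis} established in Theorem~\ref{theorem}. Since $\{b\cdot v_\Lambda \mid b\in\frak{B}_{W(\Lambda)}\}$ is a basis, the character is the generating function $\sum_{b\in\frak{B}_{W(\Lambda)}} z_1^{?}z_2^{?}q^{d(b)}$ graded by charge and degree; so everything reduces to organizing the sum over $\frak{B}_{W(\Lambda)}$ by the combinatorial data recording, for each color $\gamma_i$ and each charge value $j\in\{1,\dots,k\}$, the number $M_{i,j}$ of quasi-particles of color $\gamma_i$ and charge $j$ occurring in $b$. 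First I would observe that a choice of the $M_{i,j}$ determines the color-charge type of $b$ completely, and that, given this type, the allowed degree vectors $(m_{i,j})$ form a product of translated ``partition-like'' cones: the first two inequalities of \eqref{basis} say that within each block of fixed color and charge the degrees are strictly decreasing by steps of at least $2j$, and the third and fourth inequalities pin down the top of each block. Consequently the degree generating function over all $b$ with these fixed $M_{i,j}$ factors as $q^{d(b_{M_{i,j}})}/\prod_{i,j}(q)_{M_{i,j}}$, where $b_{M_{i,j}}$ is the unique minimal (maximal-degree) monomial; this is exactly \eqref{dimensions}, and the factor $\prod (q)_{M_{i,j}}^{-1}$ accounts for the freedom of pushing each of the $M_{i,j}$ equal-color-equal-charge quasi-particles further down by extra multiples of $2j$, i.e. choosing a partition into at most $M_{i,j}$ parts.

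The core computational step is then to evaluate $d(b_{M_{i,j}})$ in closed form. I would compute the contribution of each color–charge block separately. For color $\gamma_1$, charge $j$: by the third inequality the top degree is $d_{\max}(j\gamma_1,\Lambda) - 2j(M_{1,j+1}+\cdots+M_{1,k})$ (the sum of all $2j$-shifts coming from the higher-charge $\gamma_1$ quasi-particles standing to the right), and the remaining $M_{1,j}-1$ quasi-particles in the block sit at successive degrees lower by $2j$ each; summing the arithmetic progression and summing $-m$ over the block yields the expression in \eqref{gamma1}, where the term $\max\{0,j-k_0\}M_{1,j}$ comes from unfolding $d_{\max}(j\gamma_1,\Lambda) = -\min\{j,k_0\} - 2\max\{j-k_0,0\} = -j - \max\{0,j-k_0\}$ via \eqref{dmaxmore}. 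The color $\gamma_2$ blocks are handled the same way but now the top degree also absorbs the cross-color shifts $-\sum_{j'}\max\{0,j+n_{1,j'}-k\}$ from the fourth inequality; collecting these as in \eqref{gamma2} and again expanding $d_{\max}(j\gamma_2,\Lambda) = -j - \max\{0,j-(k_0+k_1)\}$ produces the stated quadratic-plus-linear form. Adding up over all $j$ and both colors, I would recognize the total quadratic part as ${}^t\mathbf M\,Q^{(k)}\,\mathbf M$ with $Q^{(k)}$ as in \eqref{matrix}: the diagonal blocks $A^{(k)}=(\min\{i,j\})$ come from the $jM_{i,j}^2$ and $2jM_{i,j}M_{i,j'}$ terms within a single color (since $\sum_j jM_{i,j}^2 + \sum_{j<j'}2jM_{i,j}M_{i,j'} = {}^t\mathbf M_i A^{(k)}\mathbf M_i$), the upper-right block $B^{(k)}=(\max\{0,i+j-k\})$ comes from the cross-color terms $M_{2,j}(M_{1,k-j+1}+\cdots+jM_{1,k})$ rewritten as $\sum_{j'}\max\{0,j+j'-k\}M_{2,j}M_{1,j'}$, and the lower-left block is $0$ because the $\gamma_1$ quasi-particles receive no shift from $\gamma_2$ ones. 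The linear part collects to $\mathbf L^{(k)}_\Lambda\cdot\mathbf M$ with $\mathbf L^{(k)}_\Lambda$ as in \eqref{vector}, the two blocks of zeros of lengths $k_0$ and $k_0+k_1$ reflecting exactly that $\max\{0,j-k_0\}$ and $\max\{0,j-(k_0+k_1)\}$ vanish for small $j$.

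Finally I would substitute $d(b_{M_{i,j}}) = {}^t\mathbf M\,Q^{(k)}\,\mathbf M + \mathbf L^{(k)}_\Lambda\cdot\mathbf M$ into \eqref{dimensions}, note that the charge constraint forces $\sum_j jM_{i,j} = n_i$, and assemble \eqref{character} to obtain \eqref{formula}. The main obstacle is purely the bookkeeping in the previous paragraph: one must be careful that the cross-color shifts in the fourth inequality of \eqref{basis} are attributed to the correct blocks (that the sum $\sum_{j'}$ there runs over all $\gamma_1$ quasi-particles, of every charge, rather than only those of higher charge, which is why it produces $B^{(k)}$ rather than a strictly-triangular matrix), and that the reindexing via the partial sums $N_{1,j}, N_{2,j}$ of \eqref{connection}—with $\gamma_1$ blocks ordered by increasing distance from the right and $\gamma_2$ blocks by increasing distance from the left—matches the ordering conventions built into \eqref{basis}. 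Everything else is arithmetic-progression summation and the identity $\sum_{M}q^{?}/(q)_M$ for partitions into at most $M$ parts of a fixed size, both entirely routine.
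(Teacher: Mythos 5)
Your proposal is correct and follows essentially the same route as the paper: reduce the character to a sum over the charge data $M_{i,j}$, factor each fixed-type contribution as $q^{d(b_{M_{i,j}})}/\prod_{i,j}(q)_{M_{i,j}}$ as in \eqref{dimensions}, compute the minimal-monomial degree block by block from the inequalities of \eqref{basis} (recovering \eqref{gamma1} and \eqref{gamma2}), and repackage the result as ${}^{t}\mathbf{M}\,Q^{(k)}\,\mathbf{M}+\mathbf{L}^{(k)}_{\Lambda}\cdot\mathbf{M}$. Your identification of the blocks $A^{(k)}$, $B^{(k)}$ and of the linear term, including the simplification $d_{\max}(j\gamma_i,\Lambda)=-j-\max\{0,j-\cdot\}$, matches the paper's computation.
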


\begin{example}
We present fermionic-type formulas of Theorem \ref{formulas} in the case of Feigin-Stoyanovsky's type subspaces $W(\Lambda)$ of standard level two $\mathfrak{sl}(3,\mathbb{C})^{\widetilde{}}-$modules. Note that the Theorem does not provide us with formula in the case of $\Lambda = \Lambda_0 + \Lambda_2$, while for all other cases the formulas are obtained. First, we get \begin{align*} Q^{(2)} = \left( \begin{array}{cccc} 1 & 1 & 0 & 1 \\ 1 & 2 & 1 & 2 \\ 0 & 0 & 1 & 1 \\ 0 & 0 & 1 & 2 \end{array} \right), \end{align*} so the quadratic term reads \begin{align*} Q &= M_{11}^2 + 2M_{12}^2 + M_{21}^2 + 2M_{22}^2 + \\ & + 2M_{11} M_{12} + M_{11}M_{22} + M_{12}M_{21} + 2M_{12}M_{22} + 2M_{21}M_{22}. \end{align*} The linear terms $L_{\Lambda} = L_{k_0,k_1,k_2}$ for $\Lambda = k_0\Lambda_0 + k_1\Lambda_1 + k_2\Lambda_2$ have the following values: \begin{align*} L_{2,0,0} & = 0 \\ L_{1,1,0} & = M_{12} \\ L_{0,2,0} & = M_{11} + 2M_{12} \\ L_{0,1,1} & = M_{11} + 2M_{12} + M_{22} \\ L_{0,0,2} & = M_{11} + 2M_{12} + M_{21} + 2M_{22}. \end{align*} Finally, we get \begin{align*} \chi(W(\Lambda))(z_1, z_2; q) & = \sum_{n_1, n_2 \geq 0} \sum_{\genfrac{}{}{0pt}{}{M_{11} + 2M_{12} = n_1}{M_{21} + 2M_{22} = n_2}} \frac{q^{Q + L_{\Lambda}}}{ (q)_{M_{11}} (q)_{M_{12}} (q)_{M_{21}} (q)_{M_{22}}} z_1^{n_1} z_2^{n_2}. \end{align*}
\end{example}

\subsection{Comparison with existing results}

From \eqref{connection} we get $M_{1,j}=N_{1,j} - N_{1,j+1}$, $M_{2,k-j+1} = N_{2,j} - N_{2,j-1}$, $j=1,\dots, k$, which introduced in \eqref{formula} easily give \begin{align} \label{formula2} & \chi(W(\Lambda))(z_1, z_2; q) = \\ \nonumber &= \sum_{n_1, n_2 \geq 0} \sum_{ \genfrac{}{}{0pt}{}{\genfrac{}{}{0pt}{}{\sum_{i=1}^k N_{1,i} = n_1}{N_{1,1} \geq \cdots \geq N_{1,k} \geq 0}}{\genfrac{}{}{0pt}{}{\sum_{i=1}^k N_{2,i} = n_2}{N_{2,k} \geq \cdots \geq N_{2,1} \geq 0}}} \frac{q^{{}^{t}\mathbf{N} \cdot {Q^{\prime}}^{(k)} \cdot \mathbf{N} + \mathbf{L^{\prime}}^{(k)}_{\Lambda} \cdot \mathbf{N}}}{\prod_{j=1}^k (q)_{N_{1,j} - N_{1,j+1}} \prod_{j=1}^k (q)_{N_{2,j} - N_{2,j-1}}} z_1^{n_1} z_2^{n_2},\end{align} where ${Q^{\prime}}^{(k)} = {}^{t}R^{(k)}\cdot Q^{(k)}\cdot R^{(k)}$ for \begin{align*} R^{(k)} = \left(\begin{array}{c|c} C^{(k)} & 0 \\ \hline 0 & D^{(k)} \end{array} \right), \end{align*} $C^{(k)} = (C_{i,j}^{(k)})_{i,j=1}^k$, $D^{(k)} = (D_{i,j}^{(k)})_{i,j=1}^k$,
\begin{align*} C_{i,j}^{(k)} = \left\{ \begin{array}{ll} 1, \ j=i \\ -1, \ j=i+1 \\ 0, \ \textrm{otherwise} \end{array}\right. \ \ \ \ \ D_{i,j}^{(k)} = \left\{ \begin{array}{ll} 1, \ j=k-i+1 \\ -1, \ j=k-i \\ 0, \ \textrm{otherwise} \end{array}\right., \end{align*} and $\mathbf{L^{\prime}}^{(k)}_{\Lambda} = R^{(k)}\cdot \mathbf{L}^{(k)}_{\Lambda}$, $\mathbf{N}={}^{t}(N_{1,1}, \dots , N_{1,k}, N_{2,1}, \dots , N_{2,k})$.

It is not hard to see that formulas \eqref{formula2} exactly match the corresponding ones obtained in Theorem 3.11 in \cite{J3}. This basically suggests that, although it was the combinatorial bases parameterized by $(k,3)-$admissible configurations that were used in obtaining recurrence relations for characters and character formulas, the underlying logic of the formulas themselves was that of quasi-particles.

\begin{remark}
We can also write character formulas in terms of summation over all color-dual-charge types, as it was originally written in \cite{G} for fermionic characters of principal subspaces: \begin{align} \label{formula3}& \chi(W(k_0\Lambda_0 + k_1\Lambda_1))(z_1,z_2;q) = \sum_{r_{1}^{(1)}\geq \ldots \geq r_{1}^{(k)} \geq 0}\; \frac{q^{{r_{1}^{(1)}}^{2}+ \ldots + {r_{1}^{(k)}}^{2} }}{(q)_{r_{1}^{(1)} - r_{1}^{(2)}} \ldots (q)_{r_{1}^{(k - 1)} - r_{1}^{(k)}} (q)_{r_{1}^{(k)}}} \\ \nonumber & \sum_{r_{2}^{(1)}\geq \ldots \geq r_{2}^{(k)} \geq 0} \frac{q^{{r_{2}^{(1)}}^{2} + \ldots + {r_{2}^{(k)}}^{2} + r_{2}^{(1)} r_{1}^{(k)} + \ldots + r_{2}^{(k)}r_{1}^{(1)} + r_1^{(k_0+1)} + \dots + r_1^{(k)}}}{(q)_{r_{2}^{(1)} - r_{2}^{(2)}} \ldots (q)_{r_{2}^{(k - 1)} - r_{2}^{(k)}} (q)_{r_{2}^{(k)}}} z_1\sp{r_1} z_2\sp{r_2} \\ \nonumber & \chi(W(k_1\Lambda_1 + k_2\Lambda_2))(z_1,z_2;q) = \sum_{r_{1}^{(1)}\geq \ldots \geq r_{1}^{(k)} \geq 0}\; \frac{q^{{r_{1}^{(1)}}^{2}+ \ldots + {r_{1}^{(k)}}^{2} }}{(q)_{r_{1}^{(1)} - r_{1}^{(2)}} \ldots (q)_{r_{1}^{(k - 1)} - r_{1}^{(k)}} (q)_{r_{1}^{(k)}}} \\ \nonumber & \sum_{r_{2}^{(1)}\geq \ldots \geq r_{2}^{(k)} \geq 0} \frac{q^{{r_{2}^{(1)}}^{2} + \ldots + {r_{2}^{(k)}}^{2} + r_{2}^{(1)} r_{1}^{(k)} + \ldots + r_{2}^{(k)}r_{1}^{(1)} + r_1^{(1)} + \dots + r_1^{(k)} + r_2^{(k+1-k_2)} + \dots + r_2^{(k)}}}{(q)_{r_{2}^{(1)} - r_{2}^{(2)}} \ldots (q)_{r_{2}^{(k - 1)} - r_{2}^{(k)}} (q)_{r_{2}^{(k)}}} z_1\sp{r_1} z_2\sp{r_2} \end{align} for $r_1 = r_{1}^{(1)} + \dots + r_{1}^{(k)}$ and $r_2 = r_{2}^{(1)} + \dots + r_{2}^{(k)}$.
Although here we used notation introduced in Definition~\ref{cct} in order to be compatible with \cite{G}, note that taking $r_1^{(k)} = N_{1,j}$ and $r_2^{(j)}= N_{2,k-j+1}$ for $j=1, \dots, k$, turns formulas \eqref{formula3} into \eqref{formula2}.
\end{remark}

\end{document}